\numberwithin{equation}{section}
\numberwithin{figure}{section}
\theoremstyle{plain}
\newtheorem{thm}{\protect\theoremname}
  \theoremstyle{plain}
  \newtheorem{lemma}[thm]{\protect\lemmaname}
    \newtheorem*{lemma*}{Lemma}
    \newtheorem{prop}[thm]{\protect\propname}
\newtheorem*{thm*}{Theorem}
\numberwithin{thm}{section}
\theoremstyle{remark}
\newtheorem*{rem}{Remark}
\providecommand{\propname}{Proposition}
\providecommand{\lemmaname}{Lemma}
\providecommand{\theoremname}{Theorem}
\newcommand{\imag}{\operatorname{Im} \,}
\newcommand{\real}{\operatorname{Re} \,}
\renewcommand{\Im}{\imag}
\renewcommand{\Re}{\real}
\newcommand{\ee}{\epsilon}
\newcommand{\E}{\mathbf{E}}
\newcommand{\Prob}{\mathbf{P}}
\newcommand{\hcap}{\operatorname{hcap}}
\newcommand{\diam}{\operatorname{diam}}
\newcommand{\dist}{\operatorname{dist}}
\newcommand{\Z}{{\mathbb Z}}
\newcommand{\C}{{\mathbb C}}
\newcommand{\rad}{{\rm rad}}
\newcommand{\Q}{{\bf Q}}
\newcommand{\SLE}{\text{\tiny SLE}}
\newcommand {\G} {{\mathcal G}}
\newcommand{\LERW}{\text{\tiny LERW}}
\def \Half {{\mathbb H}}
\def \Disk {{\mathbb D}}
\def \F {{\cal F}}
\newcommand {{\wind}} {{\rm wind}}
\newcommand {{\Sine}}{S}
\let \setminus \smallsetminus
\let \le \leqslant
\let \leq \leqslant
\let \ge \geqslant
\let \geq \geqslant
\let \epsilon \varepsilon
\let \phi \varphi
\newcommand{\whoknows}  {{\mathcal A}}
\newcommand{\saws}{\mathcal{W}}
\newcommand {\eset}{{\emptyset}}
\title{Convergence of radial loop-erased random walk in the natural parametrization}
\author{Gregory F. Lawler}
\affil{University of Chicago}
\author{Fredrik Viklund}
\affil{KTH Royal Institute of Technology}
\begin{document}
\maketitle
\begin{abstract}
In recent work we have shown that loop-erased random walk (LERW) connecting two
boundary points of a domain converges to the chordal
Schramm-Loewner evolution (SLE$_2$) 
in the sense of curves parametrized by Minkowski content.  In this note we explain
how to derive the analogous result for LERW from a boundary point
to an interior point, converging towards radial SLE$_2$. 
\end{abstract}
\section{Introduction and main results}
\subsection{Introduction}
Let $D$ be a bounded, simply connected domain containing $0$ as an interior point. Let $a,b \in \partial D$ and suppose $\partial D$ is analytic near $a,b$. For large integer $N$, let $D_{N}$ be an approximation of $D$ using the grid $N^{-1} \Z^{2}$ with $a_{N}, b_{N} \in \partial D_{N}$ approximating $a,b$. In \cite{LV_LERW_natural, LV_lerw_chordal_note} we proved that a loop-erased random walk (LERW) in $D_{N}$ from $a_{N}$ to $b_{N}$ viewed as a continuous curve and parametrized so that each edge is traversed in time a constant times $N^{-5/4}$ converges in the scaling limit to a chordal SLE$_{2}$ curve in $D$ from $a$ to $b$ parametrized by $5/4$-dimensional Minkowski content. In short, LERW converges to SLE in the natural parametrization. The version of LERW studied in \cite{LV_LERW_natural, LV_lerw_chordal_note} is defined by taking a random walk from $a_{N}$ to $b_{N}$ conditioned on staying in $D_{N}$ and successively erasing loops as they form to produce a random \emph{chordal} self-avoiding walk. Another natural variant, \emph{radial} LERW, is the loop-erasure of a random walk from an interior point stopped when reaching the boundary. (A third version that we do not directly discuss, is the loop-erasure of a random walk in the whole plane using a limiting procedure, and corresponds to the whole-plane version of SLE$_2$.) In this note we will continue the work of \cite{LV_LERW_natural, LV_lerw_chordal_note} by proving an analogous result
 about radial LERW converging to radial SLE$_2$. The idea of the proof is to use the Markovian coupling of \cite{LV_LERW_natural, LV_lerw_chordal_note} and weight it by the relevant Radon-Nikodym derivatives in order to construct a coupling of the radial processes up to the first time the paths get near the target point. We conclude by giving separate ``continuity'' estimates for LERW and SLE near the target point in the interior. Another possible strategy would be to redo the work of \cite{LV_LERW_natural, LV_lerw_chordal_note} in the present setting, but this would need the analogue of the sharp one-point estimate of \cite{BLV} which we currently do not have for the radial version of LERW. We will start by giving a statement
 of the main result and a sketch of the argument and then
 discuss the details.
  
 \subsection{Notation}
 In order to state our results we give some notation and describe the set up. We will use
 some notations of \cite{LV_LERW_natural, LV_lerw_chordal_note} occasionally reminding the
 reader of their meanings.
 
 Let $\mathcal{A}$ be the set of bounded, simply connected subsets of $\Z^2$ containing $0$ as an interior vertex. To each $A \in \mathcal{A}$ we associate to a simply connected Jordan domain $D_A \subset \C$ obtained
 by replacing each vertex with the closed square with axis-paralell sides of length one
 centered at the vertex, and then taking the interior. We will refer to the domains $D_A$
 as ``union of squares'' domains; they are in one-to-one correspondence
 with elements of $\mathcal{A}$. 
 Suppose boundary edges $a,b \in \partial_e A$ are given; we write $a,b$ both for the edge and for its
 midpoint which is a point in $\partial D_A$.
   Let $\saws_{A,a,0}$ be the
 set of self-avoiding walks (SAW) $\eta = [\eta_0,\ldots,\eta_k]$
 with $\eta_k= 0$, $\{\eta_1,\ldots,\eta_{k-1}\} \subset A$
 and such that $a $ is the midpoint of the edge $[\eta_0,\eta_1]$. (We use the notation $\eta$ only for SAWs.)
   We also view $\eta$ as the  continuous
 curve $\eta(t): 0 \leq t \leq k - \frac 12$ with $\eta(0) = a$
  obtained by traversing
 the edges of $\eta$ at unit speed. Throughout the paper, if $\gamma:[0,t_\gamma] \rightarrow \C$ is a continuous curve, we also write
 $\gamma$ for the curve $\gamma:[0,\infty) \rightarrow \C$ extended in the natural manner as $\gamma(t)
  = \gamma(t_\gamma), \, t > t_\gamma$.  
  We define a finite measure on $\saws_{A,a,0}$ by
 \begin{equation}\label{def:radial-lerw-measure}       \hat  P_{A, a,0}(\eta) =  p(\eta)\, \Lambda_\eta(A) , \quad \eta \in \saws_{A,a,0} \end{equation}
 where $\log \Lambda_\eta(A)$ is the random walk loop measure of loops staying in $A$
 that intersect $\eta$, see \cite[Chapter 9]{LL}, and $p(\cdot) = 4^{-\# \text{steps}(\cdot)}$ is the random walk measure on the square grid.  Then
 \[   \sum_{\eta \in  \saws_{A,0,a}}   \hat   P_{A,a,0}(\eta)  = H_A(0,a), \]
 where $H_A$ denotes the Poisson kernel for random walk, that is, the probability that simple random walk from $0$ exits $A$ through the edge $a$. 
 We write \[\Prob_{A,a,0}^\text{rad}(\eta) =  \frac{\hat  P_{A,a,0}(\eta) }{H_A(0,a)}, \quad \eta \in \saws_{A,a,0}, \] for the probability
 measure obtained by normalization. This is the probability distribution of radial LERW in $A$ from $a$ to $0$. $\Prob_{A,a,0}^\text{rad}$ induces a probability measure on parametrized complex curves from $a$ to $0$ in $D_A$ after parametrizing by arclength. The distributions on other versions of LERW are given by considering different sets of SAWs and normalizing appropriately.

 Suppose now that $D$ is a simply connected domain in $\C$
containing the origin and assume $D$ has analytic
boundary.  For each positive integer $N$ we
define $D_N = D_{A_N}$ to be the largest simply
connected ``union of
squares'' domain
containing the origin whose closure is contained in $N \cdot D$
and we write $\check D = \check D_N := N^{-1} D_N$ which satisfies
$0 \in \check D \subset D$.  As $N \to \infty$, the simply connected domain $\check{D}$ converges to $D$ in the Carath\'eodory sense. Given $a_N \in \partial_e
 A_N$ we write
 $\check a= \check a_{N}:= N^{-1} a_N$. In order to state the theorem, assume that $a_{N} \in \partial D_{N}$ is a sequence chosen so that $\check{a} \to a$ as $N \to \infty$ and for each $N$, let $\Prob_N^\text{rad}$ be the probability
measure  obtained from  $\Prob_{A_N,a_N,0}^\text{rad}$ by
considering the scaled paths
 \begin{equation}\label{lerw-param} \check \eta(t) = \check \eta_N(t) := N^{-1} \,\eta(c_* t \,N^{5/4}),
 \;\;\; 0 \leq t \leq t^N_* := \frac{k + \frac 12}{c_* \, N^{5/4}}. \end{equation}
 (We use the same notation both for the measure on SAWs and parametrized curves.) Here $c_* \in (0, \infty)$ is
a fixed constant whose value is not known; it is the same constant as the $c_*$ appearing in \cite{LV_LERW_natural}. In general, given a SAW $\eta$, we will write $\check \eta_N$ for the rescaled path parametrized as in \eqref{lerw-param}. Let $\gamma(t), \, 0 \le t \le t_{\gamma}$ be radial SLE$_{2}$ in $D$ from $a$ to $0$ parametrized by $5/4$-dimensional Minkowski content; we write $\mu^{\text{rad}}=\mu_{D,a,0}^{\text{rad}}$ for its law.
\subsection{Statements}
  With these notations in place the main theorem can be stated as follows.
\begin{thm}\label{thm:main.intro2} As $N \to \infty$, the law of $\check{\eta}$ converges weakly to that of $\gamma$ with respect to the metric $\rho$.

More precisely, for each $\ee > 0$ there exists $N_{0} < \infty$ such that if $N > N_{0}$ then is a coupling of $\gamma$ with distribution $\mu^{\text{rad}}$ and $\check \eta$ with distribution $\Prob^{\text{rad}}_N$ such that
\[
\Prob \{\rho(\gamma, \check \eta) > \ee \} < \ee.
\]
\end{thm}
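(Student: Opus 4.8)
The plan is to transfer the chordal convergence result of \cite{LV_LERW_natural, LV_lerw_chordal_note} to the radial setting by a Radon--Nikodym change of measure, following the strategy outlined in the introduction. Fix a small $r>0$ and let $\tau = \tau_r$ (resp.\ $\sigma = \sigma_r$) be the first time the radial LERW path $\check\eta$ (resp.\ the radial SLE$_2$ curve $\gamma$) comes within distance $r$ of the origin. The first main step is to show that, up to time $\tau$ and $\sigma$ respectively, the radial processes are absolutely continuous with respect to chordal ones: choose an auxiliary target boundary point $b \in \partial D$ (with $b_N \to b$), and compare $\Prob^{\text{rad}}_N$ to the chordal LERW measure $\Prob_{A_N,a_N,b_N}$ and $\mu^{\text{rad}}_{D,a,0}$ to chordal SLE$_2$ from $a$ to $b$. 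In the continuum, the Radon--Nikodym derivative between radial and chordal SLE$_2$ restricted to $\{t \le \sigma_r\}$ is an explicit, bounded, continuous functional of the curve up to $\sigma_r$ (it is a ratio of conformal-radius/Poisson-kernel type quantities together with the Girsanov density coming from the different driving terms); the analogous statement for LERW is the ratio $H_{A_N}(0,a_N)^{-1}\hat P_{A_N,a_N,0}(\eta)$ against the chordal weight, which by the loop-measure description in \eqref{def:radial-lerw-measure} converges to the continuum density uniformly on the event that the path stays away from $0$. This uses only soft ingredients (convergence of discrete Poisson kernels and loop measures to their continuum counterparts, already available in the literature) rather than the sharp one-point estimate of \cite{BLV}.

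The second step is then purely a coupling argument. Start from the Markovian coupling of chordal LERW and chordal SLE$_2$ from $a$ to $b$ provided by \cite{LV_LERW_natural, LV_lerw_chordal_note}, which makes $\rho(\gamma^{\text{ch}},\check\eta^{\text{ch}})$ small with high probability. Reweight this coupling by the (discrete, respectively continuous) Radon--Nikodym derivatives described above, using the maximal coupling for the two tilted laws; since both densities are bounded above and below on $\{$paths avoiding $B(0,r)\}$ and are close to each other in the coupling (being continuous functionals of paths that are $\rho$-close), the reweighted coupling is still a near-identity coupling of $\check\eta$ stopped at $\tau_r$ and $\gamma$ stopped at $\sigma_r$, i.e.\ $\rho(\gamma|_{[0,\sigma_r]}, \check\eta|_{[0,\tau_r]}) < \epsilon$ with probability $> 1-\epsilon$, once $r$ is small and $N$ large. (Here one must check that on this event $\sigma_r$ and $\tau_r$ occur at comparable places and that the curves have not yet made their final approach to $0$.)

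The third step is the pair of ``continuity near the target'' estimates, one for SLE and one for LERW, controlling the piece of each curve after it first enters $B(0,r)$. For radial SLE$_2$ this is a statement that, with probability close to $1$ as $r \to 0$, the Minkowski content of $\gamma|_{[\sigma_r, t_\gamma]}$ is small and the diameter of this terminal arc is $O(r)$; this follows from the one-point Minkowski-content estimates for SLE$_2$ and the radial Loewner equation near the tip, together with the fact that $\gamma$ is $\mu^{\text{rad}}$-a.s.\ a continuous curve reaching $0$. For radial LERW one needs the analogous statement that the number of steps taken after entering $B(0,rN)$ is $o(N^{5/4})$ with high probability and the terminal arc stays in $B(0,CrN)$; this can be obtained from known bounds on the one-point function and the growth exponent $5/4$ for LERW (the needed input here is an \emph{upper} bound on the expected number of points of the LERW near a given point, which is available, rather than a sharp two-sided estimate). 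I expect this terminal-segment control for LERW to be the main obstacle: one has to rule out that the loop-erased walk spends an anomalously long time, in the $5/4$-rescaled clock, spiraling near the origin, and do so uniformly in $N$; the natural tool is a last-exit decomposition of the underlying random walk combined with the known LERW one-point estimates in the disk, iterated over dyadic annuli around $0$.

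Finally, combining the three steps: on the high-probability event where the reweighted coupling is good \emph{and} both terminal arcs are short, concatenating gives $\rho(\gamma,\check\eta) < \epsilon$ (the metric $\rho$ being chosen, as in \cite{LV_LERW_natural}, so that closeness of the curves up to a stopping time plus smallness of both tails implies closeness of the whole curves). Choosing first $r$ small enough to handle steps one and three, then $N_0$ large enough to handle step two, yields the stated quantitative coupling, and weak convergence follows.
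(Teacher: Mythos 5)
Your proposal is correct and follows essentially the same three-step strategy as the paper: (i) compare radial and chordal LERW/SLE via Radon--Nikodym derivatives (Section~\ref{sect:radialvschordal}), (ii) reweight the Markovian chordal coupling of \cite{LV_LERW_natural, LV_lerw_chordal_note} and control the total variation distance between the two tilted measures to get a radial coupling up to the hitting time of $B(0,r)$ (Proposition~\ref{prop:main.intro}, proved in Section~\ref{sect:coupling}), and (iii) prove separate ``continuity near the target'' estimates for the terminal arcs of radial SLE$_2$ and radial LERW (Propositions~\ref{nov30.lemma1} and~\ref{radial-reg}, which together give Proposition~\ref{prop:continuity}). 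Your identification of the key technical obstacle --- bounding the $N^{-5/4}$-rescaled length of the LERW after it enters $B(0,rN)$ via one-point estimates over dyadic annuli and a last-exit decomposition of the underlying walk --- matches the paper's Lemma~\ref{nov30.lemma2} and Proposition~\ref{radial-reg} quite closely.
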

Here we are using a metric $\rho$ on parametrized curves defined as follows. Given curves  $\gamma^{j}:[s_{j}, t_{j}] \to \mathbb{C},\, j=1,2,$  we let
\begin{equation}\label{metric}
\rho(\gamma^{1}, \gamma^{2}) = \inf \Big\{ \sup_{s_1 \le t \le t_1 }|\alpha(t)-t| + \sup_{s_1 \le t \le t_1}|\gamma^{1}(t) - \gamma^{2}(\alpha(t))| \Big\},
\end{equation}
where the infimum is taken over increasing homeomorphisms (``reparametrizations'') $\alpha : [s_{1}, t_{1}] \to [s_{2}, t_{2}]$.
\begin{rem}
The metric $\rho$ is convenient to work with but it is not the only natural choice. It has the disadvantage that the metric space of parametrized curves $\gamma: [0,t_\gamma] \to \C$ with the metric $\rho$ is not complete. An alternative would be to consider curves $\gamma: [0, t_\gamma] \to \C$ as elements $\gamma = (t_\gamma, t \mapsto \gamma(t \wedge t_\gamma)) \in [0, \infty) \times \mathcal{C}[0,\infty)$ with a metric \[\hat \rho \left\{ \gamma^1, \gamma^2\right\}= |t_1-t_2| + \sup_{t \in [0,\infty)}|\gamma^1(t \wedge  t_1) - \gamma^2(t \wedge  t_2)|, \]
if  $\gamma^j = (t_j, t \mapsto  \gamma^j(t \wedge t_j)), \, j=1,2.$   
\end{rem}
By a straightforward
estimate of the conformal map $f: \check D \rightarrow
D$ with $f(\check a ) = a, f(0)=0$, one can show that the two curves
$\gamma$ and $f\circ \gamma$ are close in the
sense of \eqref{metric}.  
See \cite[Corollary 7.3]{LV_LERW_natural} for a proof
for the analogous chordal result.  Therefore, 
it suffices to prove the corresponding result where $\gamma$
is replaced with $\check \gamma(t), \, 0 \le t \le t_{\check \gamma}$, 
 a radial SLE$_2$ curve from $\check a$ to $0$
in $\check D$.   The main work is in establishing
the following proposition that considers paths stopped before
they get too close to the origin.
\begin{prop}\label{prop:main.intro}  There exists $c < \infty$
such that for every $r > 0$ and all $N$
sufficiently large, we can define $\check \gamma(t),
 0 \leq t \leq t_{\check \gamma},$ and $\check\eta(t), 0 \leq t \leq t_N^*$,
on the same probability space $( \Omega, \Q)$
 so that the following holds.
      \begin{enumerate}
\item The marginal distribution of $\check \gamma$ is that of radial SLE$_2$ from
$\check a$ to $0$ in $\check D$ parametrized by $5/4$-dimensional Minkowski content.
\item  If $\Q_2$ denotes the
marginal distribution on $\check \eta$, and $\Prob_N^{\text{rad}}$
the distribution of (scaled) LERW from $\check a$ to $0$ in $\check{D}$ parametrized as in \eqref{lerw-param},
  then
$$\| \Q_2 - \Prob_N^{\text{rad}}\| \le c \, r,$$ where
$\|\cdot\|$ denotes total variation distance.
 \item  There is a Markovian stopping time $\tau$ for the coupled pair $(\check \gamma, \check \eta)$  and an event  $E$  with  $ \Q(E) 
\geq 1- cr$ on which
\[ 
   \max_{0 \leq t\leq \tau}
  |\check \gamma(t) - \check \eta(t)| \leq r,\]
 and
 \[ |\check \gamma(\tau)| \le r. \]
\end{enumerate} 
    \end{prop}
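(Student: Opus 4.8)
The plan is to obtain the coupling on $(\Omega,\Q)$ by tilting the Markovian chordal coupling of \cite{LV_LERW_natural, LV_lerw_chordal_note} by the Radon--Nikodym derivatives that turn chordal SLE$_2$ and chordal LERW into their radial versions, run only until the curves first come within distance $r$ of the target $0$.

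We first reduce to curves stopped near $0$. Fix a boundary point $b\in\partial\check D\setminus\{\check a\}$ for a pair of reference chordal processes. Let $\sigma^\gamma$ and $\sigma^\eta$ be the first times $\check\gamma$ and $\check\eta$ hit $\overline{B(0,r)}$, and let $\tau_b$ be the first time the reference chordal curve comes within $\delta_0=\delta_0(r)$ of $b$ (or disconnects $b$). We will build the coupling for the curves stopped at the relevant time and then continue each one to $0$ by its own domain Markov property: radial SLE$_2$ from the tip to $0$ in the slit domain, continued from $\sigma^\gamma\wedge\tau_b$, and radial LERW from the tip to $0$ in the slit ``union of squares'' domain, continued from $\sigma^\eta$. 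Because $\sigma^\gamma\wedge\tau_b$ is a stopping time for $\check\gamma$ and $\sigma^\eta$ one for $\check\eta$, the continued $\check\gamma$ is genuine radial SLE$_2$ as soon as $\check\gamma|_{[0,\sigma^\gamma\wedge\tau_b]}$ has the right stopped law, the continued $\check\eta$ has a law whose total-variation distance to radial LERW is at most that between the stopped laws of $\check\eta|_{[0,\sigma^\eta]}$ and stopped radial LERW, and (3) concerns only the stopped curves; so it suffices to construct the stopped coupling.

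We use two absolute-continuity facts. On the SLE side: radial SLE$_2$ from $\check a$ to $0$ and chordal SLE$_2$ from $\check a$ to $b$ in $\check D$, stopped at $\sigma^\gamma\wedge\tau_b$, are mutually absolutely continuous with an explicit Radon--Nikodym derivative $M_{\sigma^\gamma\wedge\tau_b}$ --- a bounded martingale under chordal SLE$_2$, built from the Loewner maps as a ratio of a conformal radius and Poisson-kernel terms (the classical coordinate change) --- which on $\{\sigma^\gamma<\tau_b\}$, intersected with a high-probability event on which the curve has not wound tightly around $\overline{B(0,r)}$, satisfies $c(r)^{-1}\le M_{\sigma^\gamma}\le c(r)$, and which is small at $\tau_b$ on $\{\tau_b\le\sigma^\gamma\}$ once $\delta_0(r)$ is small (there the chordal curve heads into a tiny neighborhood of $b$, where its Poisson kernel is large). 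On the LERW side: the loop-measure weights in \eqref{def:radial-lerw-measure} give, after manipulating the random walk loop measure and the discrete Poisson kernels and Green's functions, that radial LERW from $\check a$ to $0$ and chordal LERW from $\check a$ to $b$ stopped at the discrete analogue of $\sigma^\eta$ are mutually absolutely continuous with a discrete Radon--Nikodym derivative $\tilde M$ of the same form. The key estimate --- which I expect to be the main obstacle --- is that $\tilde M\to M$ as $N\to\infty$, uniformly over slit domains coming from curves that stay distance $\ge r$ from $0$ (and do not wind too tightly around $\overline{B(0,r)}$) and uniformly along pairs of $\rho$-close curves; this combines convergence of discrete Poisson kernels and Green's functions in rough slit domains with continuity of $M$ as a functional of the curve away from $0$. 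Unlike the alternative strategy noted in the introduction, this should need only such convergence statements, not the sharp one-point estimate of \cite{BLV}.

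The construction then runs as follows. Given $\epsilon>0$, for $N$ large \cite{LV_LERW_natural, LV_lerw_chordal_note} provide an adapted coupling $\pi$ of chordal LERW $\check\eta$ from $\check a$ to $b$ (scaled as in \eqref{lerw-param}) and chordal SLE$_2$ $\check\gamma$ from $\check a$ to $b$ (Minkowski-content parametrization) with $\rho(\check\gamma,\check\eta)<\epsilon$ off an event of $\pi$-probability $<\epsilon$. Set $d\Q/d\pi=M_{\sigma^\gamma\wedge\tau_b}(\check\gamma)$, a probability density because $M$ is a martingale under the $\check\gamma$-marginal of $\pi$. Then the $\check\gamma$-marginal of $\Q$, stopped at $\sigma^\gamma\wedge\tau_b$, is stopped radial SLE$_2$, so the continuation above gives (1). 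Since $M_{\sigma^\gamma\wedge\tau_b}$ is a function of $\check\gamma$, the $\check\eta$-marginal of $\Q$ is chordal LERW reweighted by $g(\check\eta):=\EE_\pi[M_{\sigma^\gamma\wedge\tau_b}(\check\gamma)\mid\check\eta]$; as the coupling is adapted and $\check\gamma,\check\eta$ stay $\rho$-close, $g(\check\eta)$ is essentially $\mathcal F^\eta_{\sigma^\eta}$-measurable and close to $M_{\sigma^\eta}(\check\eta)$, hence (by $\tilde M\to M$) close to $\tilde M_{\sigma^\eta}(\check\eta)$, the reweighting producing stopped radial LERW; making this quantitative on the good event, using $M\le c(r)$ and the smallness of $M_{\tau_b}$ off it, and choosing $\delta_0(r)$, then $\epsilon$, then $N$ suitably, gives (2). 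For (3), take $\tau=\sigma^\gamma$ and let $E$ be the intersection of $\{\sigma^\gamma<\tau_b\}$, the event that $\check\gamma,\check\eta$ are $\rho$-close up to $\tau$, and the event that the conformal radius from $0$ stays controlled up to $\tau$; then $\Q(E^c)\le cr$ by the same ingredients, $|\check\gamma(\tau)|\le r$ by the choice of $\tau$, and $\max_{0\le t\le\tau}|\check\gamma(t)-\check\eta(t)|\le r$ follows from $\rho$-closeness together with the high-probability equicontinuity of LERW curves in the natural parametrization, which upgrades closeness in $\rho$ to closeness at equal times. Apart from the convergence $\tilde M\to M$, the remaining technical points are this passage between metrics and the check that the chordal coupling is adapted enough for the tilt to be legitimate on the joint space.
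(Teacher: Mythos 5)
Your high-level strategy matches the paper's: weight the Markovian chordal coupling of \cite{LV_LERW_natural, LV_lerw_chordal_note} by the radial/chordal Radon--Nikodym derivatives, and separately identify the comparison of the discrete and continuous RN derivatives (your ``$\tilde M\to M$'', the paper's Lemma~\ref{lerwrnlemma} via \eqref{nov8.4}) as the main input. But there is a genuine gap in how you interact with the chordal coupling's Markovian structure, and a smaller presentational difference in how item~(2) is established.

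The gap concerns your choice of stopping time. You tilt by $M_{\sigma^\gamma\wedge\tau_b}(\check\gamma)$ and take $\tau=\sigma^\gamma$. But the chordal coupling only has a Markovian structure along the \emph{discrete} sequence of coupling times $(\tau_k,m_k)$: there is a filtration $\mathcal G_k$ with $\mathcal F^\SLE_{\tau_k}\wedge\mathcal F^\LERW_{m_k}\subset\mathcal G_k$ and $\tilde{\mathcal F}^\SLE_{\tau_k}\wedge\tilde{\mathcal F}^\LERW_{m_k}\perp\mathcal G_k$, and this is the only structure available. Your $\sigma^\gamma$ is a stopping time for $\check\gamma$ alone and will typically land strictly between consecutive $\tau_k$'s; as a result, the weight $M_{\sigma^\gamma\wedge\tau_b}(\check\gamma)$ is not $\mathcal G_k$-measurable for any coupling time $k$, and the tilted measure $\Q$ need not have the independence structure you need to continue $\check\eta$ beyond $\sigma^\eta$ by ``its own'' radial law, nor to justify that $\tau$ is a Markovian stopping time for the \emph{pair} in the sense Proposition~\ref{prop:main.intro}(3) demands. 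The paper avoids this by taking $J$ to be the first coupling index $k$ with $t_k>\xi$ (where $\xi$ is defined through a lower bound on the sine and a distance-to-$0$ criterion), tilting by the $\mathcal G_J$-measurable $M^\SLE_{\tau_J}$, and then setting $\tau=m_J$. The paper even flags this explicitly: ``the care in the coupling was taken in order to guarantee that $J$ is a stopping time for the coupled processes.'' The overshoot between $\xi$ and $t_J$ is controlled by the auxiliary time $\xi'$ with a capacity margin $\delta'>h$, together with Lemma~\ref{lemma2.nov9}; this deterministic geometry step, which keeps $\tau_J$ away from $b$ and gives the lower bound $h_{\check D_{\xi'}}(0,\check b)\geq c'$, is missing from your argument and cannot be replaced by simply stopping at a $\delta_0$-neighbourhood of $b$.

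For item~(2), you compute the $\check\eta$-marginal of $\Q$ as a conditional expectation $g(\check\eta)=\EE_\pi[M(\check\gamma)\mid\check\eta]$ and argue it is close to $\tilde M(\check\eta)$. This can be made to work, but only via the bound $\EE_\pi\lvert M(\check\gamma)-\tilde M(\check\eta)\rvert\leq c_{r,\delta}\epsilon_N+cr$, which is precisely what the paper establishes directly by defining \emph{two} measures, $d\Q^\SLE=M^\SLE_{\tau_J}\,d\Prob$ and $d\Q^\LERW=M^\LERW_{m_J}\,d\Prob$, both on $\mathcal G_J$, and bounding $\|\Q^\SLE-\Q^\LERW\|$. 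That route avoids any discussion of the conditional law of $\check\gamma$ given $\check\eta$, which is not under quantitative control in the chordal coupling. Finally, your sketch does not isolate the bound $\Q(E^\LERW)\geq 1-O(r)$ on the LERW side; the paper obtains this from the (separately needed) Lemma~\ref{aug7.lemma3}, which follows from the Lawler--Schramm--Werner convergence rather than from the chordal coupling itself. I would recommend rewriting the construction around the coupling times $(\tau_J,m_J)$ and the two tilted measures; your identification of the RN-derivative comparison as the technical crux is correct and is handled in the paper by Lemma~\ref{lerwrnlemma}.
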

By Markovian stopping time we mean, roughly speaking, that the domain Markov property is valid at the stopping time for each path individually. See Section~\ref{sect:coupling} for a precise statament. 
Given this proposition, the proof of Theorem~\ref{thm:main.intro2} follows from Proposition~\ref{prop:continuity} below. Indeed, using Proposition~\ref{prop:main.intro} we couple the paths until time $\tau$ when they are at most at distance $2r$ from $0$. Given the paths up to this time, we can then extend them independently to paths from $\check a$ to $0$. Proposition~\ref{prop:continuity} shows that the ends of the paths do not increase the $\rho$-distance much.
 
 The estimates we give here are not optimal
but they more than suffice for our purposes. The notation in the statement is the same as in Proposition~\ref{prop:main.intro}.
\begin{prop}\label{prop:continuity} There exists $c < \infty$
such that on the probability space of Proposition~\ref{prop:main.intro}, except perhaps on an event of probability
$c\, r^{1/8} $,
\[     t_{\check \gamma} - \sigma_r \leq c\, r^{1/2};\]
\[t_*^N - \sigma_r' \leq c \, r^{1/2};\]
 \[    \sup_{\sigma_r \le t \le t_{\check \gamma}}\big|\check \gamma(t)\big| \leq r^{1/2},\]
and
 \[   \sup_{\sigma_r' \le t \le t_*^N}\big|\check \eta(t)\big| \leq r^{1/2},\]
 where
 \[
 \sigma_{r} = \inf\{t \ge 0 : |\check \gamma(t)|\le r\}, \quad \sigma_{r}' = \inf\{t \ge 0 : |\check \eta(t)|\le r\}.
 \]
   \end{prop}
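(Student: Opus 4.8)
The plan is to treat the four estimates as two essentially independent pairs, one for $\check \gamma$ and one for $\check \eta$, since each depends only on the marginal law of the curve in question; the coupling of Proposition~\ref{prop:main.intro} serves only to place the two curves on a common space, and because $\|\Q_2-\Prob_N^{\text{rad}}\|\le cr$ it is enough to argue with $\check \eta$ distributed as radial LERW and $\check \gamma$ as radial SLE$_2$ in $\check D$ from $\check a$ to $0$. For each curve I would carry out the same two steps: a \emph{localization} step, showing that once the curve comes within distance $r$ of its target $0$ it almost surely never leaves $\overline{B(0,r^{1/2})}$ again; and a \emph{first-moment} step, bounding the remaining natural-parametrization time --- which equals the $5/4$-dimensional Minkowski content for SLE$_2$, and $c_*^{-1}N^{-5/4}$ times an edge count for LERW --- by Markov's inequality applied to its expectation over $\overline{B(0,r^{1/2})}$.

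For $\check \gamma$, write $\rho=r^{1/2}$. Since $\check \gamma(\sigma_r)\in\partial B(0,r)$ lies on $\partial D'$, where $D'$ is the component of $\check D\setminus\check \gamma[0,\sigma_r]$ containing $0$, we have $\dist(0,\partial D')\le r$, hence $\operatorname{crad}(D',0)\le 4r$; conditionally on $\check \gamma[0,\sigma_r]$ the continuation is radial SLE$_2$ in $D'$ targeting $0$. I would then invoke the standard near-target localization estimate for radial SLE$_2$ --- once the conformal radius of the complementary domain seen from the target has dropped below a level $\delta$, the curve reaches distance $R\ge\delta$ from the target only with probability at most $c(\delta/R)^\beta$ for some $\beta=\beta(2)>\tfrac14$, which follows from the radial Loewner equation, Koebe distortion, and the domain Markov property iterated across dyadic scales (only $\beta\ge\tfrac14$ is used below) --- to conclude that $G_\gamma:=\{\check \gamma[\sigma_r,t_{\check \gamma}]\subset\overline{B(0,\rho)}\}$ has $\Q(G_\gamma^c)\le c\,r^{1/8}$. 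On $G_\gamma$ the bound $\sup_{\sigma_r\le t\le t_{\check \gamma}}|\check \gamma(t)|\le\rho$ is immediate, and, since $\check \gamma$ is parametrized by Minkowski content, $t_{\check \gamma}-\sigma_r=\Cont(\check \gamma[\sigma_r,t_{\check \gamma}])\le\Cont(\check \gamma\cap\overline{B(0,\rho)})$. The SLE$_2$ Green's function estimates already used in \cite{LV_LERW_natural} give $\E[\Cont(\check \gamma\cap\overline{B(0,\rho)})]\le c\,\rho^{5/4}=c\,r^{5/8}$ uniformly in $N$ (the regime of $r$ bounded away from $0$ being vacuous once $c$ is large), so Markov's inequality gives $t_{\check \gamma}-\sigma_r\le c\,r^{1/2}$ outside an event of probability at most $c\,r^{5/8}/r^{1/2}=c\,r^{1/8}$.

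For $\check \eta$ I would argue in parallel. Reversing the loop-erased walk in time turns $\eta$ into a radial LERW started at $0$, so that the event ``$\check \eta$ leaves $\overline{B(0,\rho)}$ after $\sigma_r'$'' becomes ``this reversed walk, after reaching distance $\rho N$ from $0$, later returns to within $rN$ of $0$'' --- a point that now lies on the boundary of the walk's own domain. Standard escape and return estimates for LERW (cf.\ \cite{BLV} and the references therein) bound the probability of the latter by a positive power of $r/\rho=r^{1/2}$, which gives $\Q(G_\eta^c)\le c\,r^{1/8}$ for the analogous event $G_\eta$; on $G_\eta$, $\sup_{\sigma_r'\le t\le t_*^N}|\check \eta(t)|\le\rho$. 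Finally, $t_*^N-\sigma_r'$ equals $c_*^{-1}N^{-5/4}$ times the number of edges of $\eta$ traversed after its first entrance to $B(0,rN)$, which on $G_\eta$ is at most the number of edges of $\eta$ lying in $\overline{B(0,\rho N)}$; by the one-point (``Green's function'') estimate for LERW, where only the version up to multiplicative constants is needed --- which is available even though the sharp form of \cite{BLV} is not --- the expectation of that count is $O((\rho N)^{5/4})$, so Markov's inequality again yields $t_*^N-\sigma_r'\le c\,r^{1/2}$ off an event of probability at most $c\,r^{1/8}$. Taking the union of the four exceptional events and enlarging $c$ finishes the proof.

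The step I expect to be the main obstacle is the localization estimate for radial \emph{LERW} near its target with a genuine \emph{polynomial} rate in $r$: unlike its SLE$_2$ counterpart it is not a direct Loewner computation and must be assembled from discrete escape and one-point estimates, the passage to the time-reversed walk (which moves the target to a boundary vertex) being the key device. The SLE$_2$ localization and the two Markov steps are routine given the content and Green's function machinery already developed in \cite{LV_LERW_natural}.
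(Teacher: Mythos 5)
Your overall plan matches the paper's: the statement is proved there via Proposition~\ref{nov30.lemma1} (for $\check\gamma$) and Proposition~\ref{radial-reg} (for $\check\eta$), each implementing exactly the two-step scheme you describe — a near-target localization estimate followed by a first-moment/Markov bound on the residual natural time, using the radial SLE Green's function (Lemma~\ref{radialcontent}) and the radial one-point estimate for LERW (Lemma~\ref{nov30.lemma2}), respectively; your observation that only the marginal laws matter is also the right reduction. For the SLE half your argument is essentially identical; the paper just cites \cite{lawler_continuity,lawler_field} for the escape estimate instead of re-deriving it from Loewner and Koebe. For the LERW half you take a mildly different route: you reverse time so that $0$ becomes the starting interior vertex and argue via an escape-and-return estimate, whereas Proposition~\ref{radial-reg} splits $\eta=\eta^-\oplus\eta^+$ at the first entry into $C_{s^2n}$, uses the forward domain Markov property to write $\eta^+$ as the loop erasure of a walk conditioned to avoid $\eta^-$ and reach $0$, and then carries out an explicit two-sided Beurling comparison (together with \cite[Lemma 6.9]{LV_LERW_natural}) to show the conditional probability of an excursion past $C_{sn}$ is $O(s)$. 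The two routes are time-reversals of each other and rest on the same harmonic-measure bounds; yours is slicker to state, but be aware that the citation of \cite{BLV} for ``escape and return estimates'' is not quite right — that paper supplies the sharp one-point estimate (which, as you correctly note, is not needed here), and what is actually needed is the Beurling estimate for the generating walk plus the comparison of conditional $p$-measures with and without the excursion, which the paper works out in detail. Finally, your exponent bookkeeping (edge count first moment $O(r^{5/8}N^{5/4})$, Markov failure probability $O(r^{1/8})$ for the $cr^{1/2}$ time bound) in fact reproduces the stated exponents exactly; Proposition~\ref{radial-reg} as literally stated would give the qualitatively-equivalent but differently-split bound $t_*^N-\sigma_r'\le cr^{1/8}$ off an event of probability $c\sqrt r$, which is fine since the paper explicitly disclaims optimality of these estimates.
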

\begin{proof}See   
 Proposition~\ref{nov30.lemma1} and Proposition~\ref{radial-reg}.
\end{proof}
\subsection*{Acknowledgements}
Lawler was supported by National Science Foundation grant DMS-1513036. Viklund was supported by the Knut and Alice Wallenberg Foundation, the Swedish Research Council, the Gustafsson Foundation, and National Science Foundation grant DMS-1308476. We also wish to thank the Isaac Newton Institute for Mathematical Sciences and Institut Mittag-Leffler where part of this work was carried out.
\section{Comparing radial and chordal}\label{sect:radialvschordal}
We will need to compare measures on paths with different target points -- radial and chordal measures, both for LERW and SLE.
\subsection{Radial and chordal SLE}\label{sect:rvscsle}

We review what is known comparing chordal and radial SLE$_2$. (Most
of this generalizes to other values of $\kappa$ but
for ease we will restrict to $\kappa = 2$.) 
We will consider the paths in $\check{D}$ with $N \ge 10$. (This restriction on $N$ is arbitrary, but we want to avoid some trivialities for small $N$.) All constants in this section will be uniform in $N \ge 10$ but may depend on $D,a,b$.  Given  a simple curve $\gamma$ from $a$ to $b$ in $D$, we write \[S_t = S_{D \setminus  \gamma_t}(0;  \gamma(t),  b)\] where for a choice of conformal map $F_t:   D \setminus   \gamma_t \to \Half$, $F_t(  \gamma(t)) = 0$, $F_t( {b}) = \infty$, we define
 \[
 S_t(z) = S_{D \setminus \gamma_t}(z;   \gamma(t),  b) = \sin(\arg F_t(z)).
 \]
Let $\mu^{\operatorname{rad}},\mu^{\operatorname{chord}}$ denote
the probability measures of radial SLE$_2$ from $\check a$ to $0$ and
chordal SLE$_2$ from $\check a$ to $\check b$ in $\check D$, respectively.  We view
these as probability measures on curves $\check \gamma$, parametrized
by Minkowski content,  stopped
at stopping times $\tau$ such that $\check \gamma(\tau) \in \check D \setminus
\{0\}$, that is, stopped before the paths reach $0$ or $\check b$. Note that the content is a deterministic function of the path, so we could equally well consider the capacity parametrization.
Write $h_D(0,b)$ for the Poisson kernel and $h_{\partial D}
(a,b)$ for the boundary Poisson kernel. 
We normalize so  that 
\begin{equation}  \label{nov8.1}       h_{\partial D}(a,b)
     =  S_{D,a,b}^{-2} \, h_D(0,a) \, h_D(0,b) .
     \end{equation}
The radial and chordal laws are absolutely continuous, in fact, if we write $\check{D}_\tau = \check{D} \setminus \check\gamma_\tau$, 
 we have
  \begin{equation}  \label{nov8.3}
   \frac{d \mu^{\operatorname{rad}} }{d\mu^{\operatorname{chord}} } \, (\check\gamma_\tau)= M^{\SLE}_\tau :
       =  \frac{S_{\check D_\tau}(0;  \check \gamma(t), \check b)^2 }
         {S_{\check D}(0,\check{a}, \check b)^2} \cdot \frac{h_{\check{D}}(0,\check b)}{h_{\check{D}_\tau}(0, \check b)} .
         \end{equation}
    It is convenient to write the last factor on the right-hand side in coordinates:
   \begin{equation}\label{eq:imags}
   \frac{h_{\check{D}}(0,\check b)}{h_{\check{D}_\tau}(0,\check b)} = \frac{\Im F(0)}{\Im F_{\tau}(0)}.
   \end{equation}
   Here $F: \check{D} \to \Half$, $F(a) = 0, \, F(b) = \infty$ and $F_{\tau} :  \check{D}_{\tau} \to \Half, \,F_{\tau}(\check{\gamma}(\tau))=0, \, F_{\tau}(\check b)=\infty$, where these maps have the same normalization, i.e., $F_{\tau}\circ F^{-1}(z) = z + o(1)$ as $z  \to \infty$. 
 
These statements can be made precise by first mapping to the upper
   half-plane and using the Girsanov theorem; see, e.g.,
   \cite{SW_coordinate}.
   We state this as a proposition.
  
  \begin{prop} 
   For every $r>0$ suppose
  that $\tau$ is a 
   stopping time such that
  $\dist(0, \check \gamma_\tau) \geq r$ and $\dist(
  \check b,\check  \gamma_\tau) \geq r$.  Suppose
  that  $\check  \gamma(t),
  0 \leq t \leq  \tau,$ has the distribution
  of  chordal
  SLE$_2$ from $ \check a$ to $  \check  b$ stopped
  at time $\tau$ with respect to the measure $\mu^{\operatorname{chord}}$.
  If the measure $\nu$ is defined by the relation \[d\nu = M_\tau^{\SLE} \, d\mu^{\operatorname{chord}},\] then under
  $\nu$, $\check \gamma(t),
  0 \leq t \leq  \tau,$ has the distribution
  of  radial
  SLE$_2$ from $ \check a$ to $0$ stopped
  at time $\tau$.
  
  \end{prop}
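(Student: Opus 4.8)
The plan is to reduce everything to the half-plane, where radial and chordal SLE$_2$ are both described by the Loewner equation driven by explicit diffusions, and then apply Girsanov. First I would fix the conformal map $F : \check D \to \Half$ with $F(\check a) = 0$, $F(\check b) = \infty$, and transport the chordal SLE$_2$ curve $\check\gamma$ to its image $\beta = F \circ \check\gamma$ in $\Half$; by conformal invariance of chordal SLE this is chordal SLE$_2$ from $0$ to $\infty$ in $\Half$, which up to a time change is the standard Loewner chain driven by $\sqrt{2}\,B_t$. Under $F$, the interior target $0 \in \check D$ becomes a marked interior point $w_0 = F(0) \in \Half$, and the key observables have clean coordinate expressions: if $g_t$ is the Loewner map with $g_t(z) \sim z$ at infinity and $W_t = \sqrt 2 B_t$ is the driving function, then $S_{\check D_t}(0; \check\gamma(t), \check b)$ corresponds to $\sin(\arg(g_t(w_0) - W_t))$ and the Poisson-kernel ratio $\Im F(0)/\Im F_\tau(0)$ corresponds to $\Im w_0 / \Im(g_t(w_0))$ (using that the normalization $F_\tau \circ F^{-1}(z) = z + o(1)$ is exactly the hydrodynamic normalization of $g_t$). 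So $M_t^{\SLE}$ becomes, up to the constant $S_{\check D}(0,\check a,\check b)^{-2} \Im w_0$, the quantity $\sin^2(\arg(g_t(w_0) - W_t)) / \Im(g_t(w_0))$.

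Next I would verify that this $M_t$ is a positive local martingale under the chordal law. Writing $Z_t = g_t(w_0) - W_t$, Itô's formula applied to the Loewner flow $dg_t(w_0) = \frac{2\,dt}{g_t(w_0) - W_t}$ and $dW_t = \sqrt 2\, dB_t$ gives $dZ_t = \frac{2\,dt}{Z_t} - \sqrt 2\, dB_t$, and a direct computation of $d\log M_t$ shows the $dt$ terms cancel for the specific exponents appearing in radial SLE$_2$ (this is the standard coordinate-change computation, e.g.\ as in Schramm--Wilson), leaving $d\log M_t = \theta_t\, dB_t - \tfrac12 \theta_t^2\, dt$ for an explicit $\theta_t$; equivalently $dM_t = \theta_t M_t\, dB_t$. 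Since on $\{t \le \tau\}$ we have $\dist(0,\check\gamma_t) \ge r$ and $\dist(\check b, \check\gamma_t) \ge r$, the conformal map $F_t$ stays under control: $\Im(g_t(w_0))$ is bounded below and $Z_t$ is bounded away from $0$ and $\infty$, so $\theta_t$ is bounded on $[0,\tau]$ and $M_t$ is a genuine (bounded) martingale on $[0,\tau]$, with $M_0 = 1$ after incorporating the normalizing constants — here \eqref{nov8.1} is exactly what makes $M_0 = 1$. Thus $d\nu = M_\tau\, d\mu^{\operatorname{chord}}$ defines a probability measure.

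Then I would apply Girsanov's theorem: under $\nu$, the process $\tilde B_t = B_t - \int_0^t \theta_s\, ds$ is a Brownian motion on $[0,\tau]$, so the driving function $W_t$ satisfies $dW_t = \sqrt 2\, d\tilde B_t + \sqrt 2\, \theta_t\, dt$. Computing $\sqrt 2\,\theta_t$ explicitly, one recognizes the radial SLE$_2$ drift in half-plane coordinates — that is, $W_t$ is now the driving function of the Loewner chain that, after mapping back by $F^{-1}$ and reparametrizing, is radial SLE$_2$ in $\check D$ aimed at $0$. Transporting back by $F^{-1}$ and matching parametrizations (capacity, or equivalently Minkowski content since content is a deterministic functional of the unparametrized curve, as noted in the text) gives that $\check\gamma$ under $\nu$ has the law of radial SLE$_2$ from $\check a$ to $0$ stopped at $\tau$. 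The main obstacle is bookkeeping rather than conceptual: one must be careful that the two target-point normalizations of the conformal maps are compatible so that the Poisson-kernel factor really is $\Im w_0/\Im g_t(w_0)$ with the right constant, and that the time-change relating the half-plane capacity parametrization to the content parametrization is the same deterministic reparametrization for both measures, so that the identification of laws is genuinely an identity of laws on parametrized curves up to $\tau$. Since all of this is standard and spelled out in \cite{SW_coordinate}, I would cite it for the detailed computation and only indicate the coordinate translation of the specific observable $M_t^{\SLE}$ in \eqref{nov8.3}.
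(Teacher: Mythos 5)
Your proposal is correct and follows the same route the paper indicates: the paper does not give a detailed proof but says precisely ``first mapping to the upper half-plane and using the Girsanov theorem; see, e.g., [SW\_coordinate],'' and you have fleshed out exactly that sketch, including the correct coordinate identification $M_t \propto \sin^2(\arg Z_t)/\Im Z_t$ with $Z_t = g_t(w_0) - W_t$, the normalization \eqref{nov8.1} yielding $M_0 = 1$, and the observation that Minkowski content is a deterministic functional so the capacity and content parametrizations carry the same information.

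One small point worth flagging, which you partially anticipate in your ``bookkeeping'' remark: the paper's stated normalization $F_\tau \circ F^{-1}(z) = z + o(1)$ at $\infty$ is not literally compatible with $F_\tau(\check\gamma(\tau)) = 0$, since the hydrodynamically normalized map sends the tip to $W_t$, not to $0$; one really has $F_\tau = g_t - W_t$ in $F$-coordinates, which has expansion $z - W_t + o(1)$. This is harmless because $W_t$ is real, so $\Im F_\tau(0) = \Im g_t(w_0) = \Im Z_t$ either way, and the formula \eqref{eq:imags} and your identification both come out the same. It is still good hygiene to note the discrepancy when writing the argument out fully.
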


\subsection{Radial and chordal loop-erased walks}\label{sect:rvsclerw}
The laws of the LERW aiming towards different points stopped appropriately can be compared in a similar manner as the SLE$_{2}$.
 Let $A \in \mathcal{A}$ with $a,b \in \partial_{e}A$ given. If $\eta = [\eta_0,\ldots,\eta_{k}]$ is a SAW starting at $a
 =[\eta_0,\eta_1]$ and otherwise staying in $A$, 
 we write $A_\eta=A \setminus \eta$, taking the component of $0$ if necessary.  Suppose 
$a' = [\eta_k,w]$ is an edge such that $w
 \in A_\eta$. Then $a'$ is a boundary edge of $A_\eta$.
We  write $\Prob_{A,a,0}(\eta \oplus a')$ for the probability that the beginning
 of the LERW from $a$ to $0$ is given by $\eta
  \oplus a'$.  Then,  
 \[    \Prob_{A,a,0}(\eta \oplus a') = 
4^{-|\eta|} \, \Lambda_\eta(A) \, \frac{H_{A _ \eta}(0, a' )}
  {H_A(0,a)}, \]
  where $H_{A _ \eta}(0, a' )=0$ if $w$ and $0$ are not in the same component of $A_\eta$. 
Let $b$ be a boundary edge of $A$. Then if $\Prob_{A,a,b}$ denotes the probability
  measure for LERW from $a$ to $b$ in $A$, we have a similar expression,
  \[    \Prob_{A,a,b}(\eta \oplus a') = 
4^{-|\eta|} \, \Lambda_\eta(A) \, \frac{  H_{\partial A _ \eta}(a',b )}
  {H_{\partial A} (a,b)}.\] 
We can write this as a ``Radon-Nikodym derivative'', in which the loop terms have cancelled:
\[      \frac{\Prob_{A,a,0}(\eta\oplus a')}{ \Prob_{A,a,b}(\eta\oplus a')}
  = \frac
  {H_{A _ \eta}(0, a' )}  { H_A(0,a)}
   \; \frac {H_{\partial A} (a,b) }  
   {  H_{\partial A _ \eta}(a',b ) }  \]   
   provided that the denominator is nonzero. 
Given $\eta, a'$ as above, we define  
   \[
   M^\LERW(\eta \oplus a') =  M^\LERW_{A,0,a,b}(\eta \oplus a')=\frac
  {H_{A _ \eta}(0, a' )}  { H_A(0,a)}
   \; \frac {H_{\partial A} (a,b) }  
   {  H_{\partial A _ \eta}(a',b ) }
   \]
   which naturally defines a martingale with respect to LERW in $A$ from $a$ to $b$. 
\begin{prop}  Let $\Prob_{A,a,b}$ be the law of
 LERW from $a$ to $b$ in $A$.
Suppose $\sigma$ is a stopping time   such that
 $ H_{\partial A _ {\eta'}}(b, a' )
 >0$,
where $\eta =[\eta_0,\ldots,\eta_{\sigma -1}]
, a' = [\eta_{\sigma-1},\eta_\sigma].$  If the measure
${\bf Q}_{A,a,0}$ is defined by the relation \[d{\bf Q}_{A,a,0} = M^\LERW(\eta \oplus a') \, d\Prob_{A,a,b},\]
then under ${\bf Q}_{A,a,0}$, $\eta \oplus a'$ has the
distribution of LERW  from $a$ to $0$ in $A$
stopped at $\sigma$.
\end{prop}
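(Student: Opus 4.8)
The plan is to carry over the change-of-measure (discrete Girsanov) argument from the preceding proposition for SLE; in the LERW setting every ingredient is an explicit combinatorial identity. First I would record the likelihood-ratio identity contained in the displays just above the statement: for an increment $\hat\eta = \eta \oplus a'$, where $a$ is the initial edge, $a' = [\eta_k,w]$ and $w \in A_\eta$,
\[
\Prob_{A,a,0}(\hat\eta) \;=\; M^\LERW(\hat\eta)\,\Prob_{A,a,b}(\hat\eta),
\]
in which $\Prob_{A,a,\cdot}(\hat\eta)$ denotes the probability that the LERW begins with $\hat\eta$; this holds whenever $H_{\partial A_\eta}(a',b) > 0$, so that the denominator in $M^\LERW(\hat\eta)$ is positive. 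Since the loop weight $4^{-|\eta|}\Lambda_\eta(A)$ is common to $\Prob_{A,a,0}$ and $\Prob_{A,a,b}$, the quantity $M^\LERW$ is a ratio of discrete harmonic measures and, as already noted, a nonnegative martingale for the edge-filtration of LERW from $a$ to $b$ in $A$, equal to $1$ on the empty increment.

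Because LERW in the finite set $A$ terminates after a bounded number of steps, $\sigma$ is bounded, so optional stopping gives that the $\Prob_{A,a,b}$-expectation of $M^\LERW$ evaluated at the path stopped at $\sigma$ equals $1$; hence ${\bf Q}_{A,a,0}$ is a probability measure. To identify it I would compute, for each possible value $\hat\eta = \eta \oplus a'$ of the $\sigma$-stopped path, the ${\bf Q}_{A,a,0}$-probability that the walk begins with $\hat\eta$. The relevant structural input is that, because $\sigma$ is a stopping time for the canonical filtration on SAWs, whether $\sigma$ equals the number of edges of $\hat\eta$ (that is, $|\eta|+1$) is determined by the first $|\eta|+1$ edges alone; consequently, for every $\hat\eta$ in the support of the stopped chordal LERW, the event that the stopped path equals $\hat\eta$ coincides with the event that the walk begins with $\hat\eta$, and on that event $M^\LERW$ equals the deterministic number $M^\LERW(\hat\eta)$. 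This yields
\[
{\bf Q}_{A,a,0}\{\,\text{stopped path} = \hat\eta\,\} = M^\LERW(\hat\eta)\,\Prob_{A,a,b}(\hat\eta) = \Prob_{A,a,0}(\hat\eta)
\]
by the likelihood-ratio identity, and — applying the same stopping-time remark to LERW from $a$ to $0$ — the right-hand side is precisely the probability that LERW from $a$ to $0$ stopped at $\sigma$ equals $\hat\eta$. Summing over $\hat\eta$ gives the asserted equality of the two stopped laws.

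The one step that needs genuine care, and where the hypothesis $H_{\partial A_\eta}(a',b)>0$ at time $\sigma$ is used, is matching supports: a priori the radial walk could traverse an initial segment $\hat\eta$ that disconnects $b$ from $a'$ in $A_\eta$, so that $\Prob_{A,a,b}(\hat\eta) = 0$ while $\Prob_{A,a,0}(\hat\eta) > 0$, in which case the displayed identity would be vacuous there. The hypothesis forbids exactly this, forcing $\Prob_{A,a,b}(\hat\eta) = 4^{-|\eta|}\Lambda_\eta(A)\,H_{\partial A_\eta}(a',b)/H_{\partial A}(a,b) > 0$ on every stopped path the radial walk can produce. With this, every admissible stopped value of the radial LERW is an admissible stopped value of the chordal LERW, so the sum over $\hat\eta$ of ${\bf Q}_{A,a,0}\{\,\text{stopped path} = \hat\eta\,\}$ equals $\sum_{\hat\eta}\Prob_{A,a,0}(\hat\eta) = 1$ and no mass is lost. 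I expect this bookkeeping about supports, rather than the algebra, to be the main (and rather mild) obstacle; the remainder is just substitution of the two explicit formulas for $\Prob_{A,a,0}$ and $\Prob_{A,a,b}$ together with the domain Markov property of LERW.
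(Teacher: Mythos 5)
The paper states this proposition without proof, taking it to follow directly from the two preceding displays (the explicit formulas for $\Prob_{A,a,0}(\eta\oplus a')$ and $\Prob_{A,a,b}(\eta\oplus a')$ and the definition of $M^\LERW$ as their ratio). Your argument is exactly the natural expansion of that terse statement and it is correct: record the stepwise likelihood-ratio identity, note that for a stopping time the event $\{\text{stopped path}=\hat\eta\}$ is the same as $\{\text{walk begins with }\hat\eta\}$ and $M^\LERW_\sigma$ is constant on it, and then substitute. Your observation about support matching is indeed the one genuine subtlety and is exactly where the hypothesis $H_{\partial A_\eta}(a',b)>0$ enters.

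One point you could make slightly more explicit: when you invoke optional stopping to conclude $\E_{\Prob_{A,a,b}}[M^\LERW_\sigma]=1$, you are implicitly using that $M^\LERW$ is a true martingale (not merely a supermartingale) on $[0,\sigma]$, which requires that at every \emph{intermediate} time every radially admissible next edge is also chordally admissible. This does follow from the hypothesis at time $\sigma$ by a concatenation/monotonicity argument (a path from $a'$ to $b$ in $A_\eta$ extends to a path from any earlier tip to $b$ in the correspondingly larger complement), but it is worth saying. That said, since your pointwise identity together with the support-matching step already shows $\sum_{\hat\eta}\mathbf{Q}_{A,a,0}\{\text{stopped path}=\hat\eta\}=\sum_{\hat\eta}\Prob_{A,a,0}(\hat\eta)=1$, the separate optional-stopping step is redundant and could be dropped entirely. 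Overall the proof is sound and matches the intent of the paper.
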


An important fact proved in \cite{KL} (Eq. (41)) is that there is a constant $\tilde{c}$ such that
 \begin{equation}  \label{nov8.2}
   H_{\partial A}(a,b) = \tilde c \, H_A(0,a) \, H_A(0,b)
  \, S_{A,a,b}^{-2} \, [1 + O_{\delta}(N^{-u})]
  \end{equation}
  if $S_{A,a,b} \geq \delta$. (This also writes the Green's function next to the boundary as a constant times the boundary Poisson kernel.) Here we are writing $S_{A,a,b}$ for $S_{D_A, a,b}(0)$ as defined in the previous subsection.
    If $\dist(\eta, 0) \ge r N$, then we can write
  \begin{equation}  \label{nov8.4}
    M^{\LERW}(\eta \oplus a')=\frac{\Prob_{A,a,0}(\eta\oplus a')}{ \Prob_{A,a,b}(\eta\oplus a')}
= \frac{S_{ A_\eta,a',b}^2}
        {S_{ A,a,b}^2} \, \frac{H_A(0,b)} 
           {H_{A_\eta}(0,b)} 
            \, [1 + O_{r,\delta}(N^{-u})], 
            \end{equation}
   provided that the sines are bounded below by $\delta > 0$. This is very similar to \eqref{nov8.3}.

Let $I_{s}$ be the set of SAWs $\eta$ in $A$ such that $\eta \cap C_{s} \neq \emptyset$. Let $I_s^*$ be the set of SAWs 
 $\eta = [\eta_0,\ldots,\eta_n] \in I_s$ such
 that $[\eta_0,\ldots,\eta_{n-1}] \not \in I_s$.
 
   \begin{lemma}\label{lerwrnlemma}Let $r, \delta > 0$ be given. For all $A, \eta$ with $\eta \in I_{s}^{*}$ such that
   \begin{equation}\label{condition111}
s \ge r N, \quad  \min_{j \leq n}
   S_{A_{j},a_{j},b} \ge \delta, \quad A_{j}=A \setminus \eta[0, \ldots \eta_{j}],
\end{equation}
we have
\[\frac{H_{A}(0,b)}{H_{{A_{\eta}}}(0,b)} = \frac{\Im F(0)}{\Im F_{\eta}(0)}\left(1+O_{r, \delta}(N^{-1/20}) \right).\]
Here  $F_{\eta}: D_{A_{\eta}} \to \mathbb{H}$ is the conformal map with $F_{\eta}(a_{\eta}) = 0, F_{\eta}(b) = \infty$ and such that $F_{\eta} \circ F^{-1}(z) = z + o(1)$  as $z \to \infty$. The error estimate is uniform over $A,\eta$ satisfying \eqref{condition111}.
\end{lemma}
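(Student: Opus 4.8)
The plan is to strip the lattice‑dependent constant from each of $H_A(0,b)$ and $H_{A_\eta}(0,b)$ and thereby reduce the statement to the exact continuum identity $h_{D_A}(0,b)/h_{D_{A_\eta}}(0,b)=\Im F(0)/\Im F_\eta(0)$, which is the discrete analogue of \eqref{eq:imags}. So the first thing I would do is record what \eqref{condition111} gives us geometrically. The condition $\eta\in I_s^*$ with $s\ge rN$ means $\eta$ stays outside $C_s$ until its final step, so $\dist(0,\eta)\ge s-O(1)$, which is $\gtrsim_{r}N$ since the domains in question have $\dist(0,\partial A)\asymp N$. More importantly, $\min_{j\le n}S_{A_j,a_j,b}\ge\delta$ forces the \emph{whole} path $\eta$ to stay macroscopically far from $b$: if at some step the tip $\eta_j$ came within distance $\epsilon\,\inrad(D_{A_j})$ of $b$, then $F_j$ (which sends $\eta_j\mapsto 0$ and $b\mapsto\infty$) would map the far‑away point $0$ to a point whose argument is within $C\epsilon$ of $0$ or $\pi$ — by the usual Koebe/Beurling distortion estimates, comparing with the disk this is just the statement that the short boundary arc between $\eta_j$ and $b$ has harmonic measure $O(\epsilon)$ from $0$ — so $S_{A_j,a_j,b}\le C\epsilon$. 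Hence $\dist(\eta,b)\ge c(r,\delta)N$; in particular the vertex $x_b\in A$ incident to the edge $b$ still lies in $A_\eta$, $b\in\partial_e A_\eta$, and the lattice configuration in a disk of radius $\asymp N$ around $b$ is literally the same for $A$ and for $A_\eta$, while $0$ is deep inside both $D_A$ and $D_{A_\eta}$.

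Next comes the discrete‑to‑continuum estimate. A last‑exit decomposition at the vertex $x_b$ gives $H_A(0,b)=\tfrac14 G_A(0,x_b)$ and $H_{A_\eta}(0,b)=\tfrac14 G_{A_\eta}(0,x_b)$, where $G$ is the simple random walk Green's function for the walk killed on leaving the domain. One then invokes the known discrete‑to‑continuum estimates for ``union of squares'' domains with an interior pole and a flat boundary edge — the same circle of estimates behind \eqref{nov8.2} in \cite{KL} — which, for $0$ at distance $\ge\epsilon N$ from $\partial D_A$, give
\[
H_A(0,b)=c_0\,h_{D_A}(0,b)\,\bigl(1+O_{r,\delta}(N^{-u})\bigr),
\]
where $h_{D_A}(0,b)$ is the continuum Poisson kernel (the inward normal derivative at $b$ of the continuum Green's function, up to normalization) and $c_0$ is a universal constant depending only on the lattice picture in a bounded neighborhood of $b$. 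By the previous paragraph the constant $c_0$ and the error exponent $u$ carry over verbatim to $A_\eta$, so dividing,
\[
\frac{H_A(0,b)}{H_{A_\eta}(0,b)}=\frac{h_{D_A}(0,b)}{h_{D_{A_\eta}}(0,b)}\,\bigl(1+O_{r,\delta}(N^{-u})\bigr).
\]

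It remains to identify the continuum ratio. Expressing the continuum Green's functions in $\Half$ through $F$ and $F_\eta$ and using $\mathfrak{g}_{\Half}(z_0,Z)=2\,\Im z_0\,\Im Z\,|Z|^{-2}+O(|Z|^{-3})$ as $Z\to\infty$, together with the normalization $F_\eta\circ F^{-1}(z)=z+o(1)$ near $\infty$ (so $F_\eta(w)=F(w)+o(1)$ as $w\to b$), one gets
\[
\frac{h_{D_{A_\eta}}(0,b)}{h_{D_A}(0,b)}=\lim_{w\to b}\frac{\mathfrak{g}_{D_{A_\eta}}(0,w)}{\mathfrak{g}_{D_A}(0,w)}=\frac{\Im F_\eta(0)}{\Im F(0)},
\]
which is exactly \eqref{eq:imags}; this is the same computation that underlies the Girsanov relation in Section~\ref{sect:rvscsle}. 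Combining the three displays proves the lemma, with the exponent $1/20$ absorbing the rate $u$.

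The main obstacle is the second step together with the verification in the first: producing the discrete‑to‑continuum comparison of the Poisson kernel with a \emph{polynomial} error that is \emph{uniform} over all $(A,\eta)$ allowed by \eqref{condition111}. Everything hinges on the distortion argument showing that $\eta$ — hence the extra slit distinguishing $A_\eta$ from $A$ — is macroscopically far from $b$, so that the lattice neighborhood of $b$, and with it the constant $c_0$, does not change when passing from $A$ to $A_\eta$. Once that is in hand, the comparison is a by‑now standard, if technical, application of the boundary estimates already available for these domains, and the continuum identity is essentially bookkeeping.
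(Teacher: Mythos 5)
Your high-level plan — strip a lattice constant from each Poisson kernel and reduce to the continuum identity $h_{D_A}(0,b)/h_{D_{A_\eta}}(0,b)=\Im F(0)/\Im F_\eta(0)$ — does match the shape of the paper's proof, and your first paragraph correctly notices that the sine lower bound forces the slit to stay macroscopically far from $b$, which is indeed an essential ingredient. Your final paragraph, computing the ratio of continuum Poisson kernels from the $\mathbb{H}$-normalization of $F_\eta\circ F^{-1}$, is also essentially the computation the paper does. The last-exit decomposition $4H_A(0,b)=G_A(0,b^*)$, $4H_{A_\eta}(0,b)=G_{A_\eta}(0,b^*)$ is the right starting move.

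The genuine gap is the middle step, and it is not a small one. You assert that $H_{A}(0,b)=c_0\,h_{D_A}(0,b)\bigl(1+O_{r,\delta}(N^{-u})\bigr)$ with the \emph{same} universal $c_0$ and exponent for the slit domain $A_\eta$, and you justify the ``carries over verbatim'' by the observation that the lattice picture in a macroscopic disk around $b$ is unchanged. But the Poisson kernel $H_{A_\eta}(0,b)$ is a \emph{global} quantity, and $A_\eta$ is a slit domain: the slit is a one-dimensional lattice path running from $\partial A$ to distance $\asymp rN$ of $0$, so $\partial A_\eta$ is as rough as a lattice boundary can be. The discrete-to-continuum estimates from \cite{KL} that one can actually cite with polynomial error for arbitrary simply connected lattice domains are Green's function estimates valid only in the bulk (the region where the continuum Green's function exceeds a small power of $N^{-1}$, Theorem~1.2 of \cite{KL}); there is no off-the-shelf absolute Poisson-kernel-to-continuum comparison, uniform over slit domains, that you can appeal to here. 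Indeed, if such an estimate were available the paper would have cited it; instead the entire Section~5 argument exists precisely to prove the \emph{ratio} statement without ever proving the absolute statement you are invoking.

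The mechanism the paper actually uses to close this gap — and which your proposal needs — is the following. Set $b^*$ the inner vertex of $b$ and $A^*=\{z:\,g_{A_\eta}(0,z)\ge N^{-1/16}\}$. Choose a \emph{mesoscopic} box $Q$ around $b$ contained in $(A_\eta\setminus A^*)$, small enough that $Q$ (and hence the lattice geometry in $Q$, and hence the exit distribution $\Prob^{b^*}\{S_\sigma\in\cdot\}$) is \emph{identical} for $A$ and $A_\eta$, yet chosen so that the exit through $Q_{\text{sides}}=\partial Q\cap A_\eta$ is negligible compared to exit through $Q_{\text{top}}=\partial Q\cap A^*$ (KL Lemma~3.11, using the sine lower bound). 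Then decompose $G_{A_\eta}(0,b^*)$ via the strong Markov property at $\sigma=\tau_Q$. For $z\in Q_{\text{top}}\subset A^*$ the KL Green's function comparison applies to both $A$ and $A_\eta$, and one substitutes the asymptotics $g_{A_\eta}(0,z)\sim 2\,\Im F_\eta(0)\,\Im F(z)/|F(z)|^2$, valid because $\Im F(z)$ is large on $\partial Q$. Taking the ratio, the sum $\sum_{z\in Q_{\text{top}}}\frac{\Im F(z)}{|F(z)|^2}\Prob^{b^*}\{S_\sigma=z\}$ cancels \emph{exactly} between numerator and denominator, leaving $\Im F(0)/\Im F_\eta(0)$ up to the accumulated polynomial errors. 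This cancellation of the (hard-to-estimate) near-$b$ exit probabilities, rather than a black-box Poisson kernel comparison, is what makes the argument work; without it, the step from ``same lattice near $b$'' to ``same universal constant $c_0$'' is unsupported for the rough domain $A_\eta$.
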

\begin{proof}
See Section~\ref{sect.lemma1proof}.
\end{proof}

\section{Regularity estimates}
This section gives estimates on the content of the SLE and LERW near the target point and estimates for the geometric ``sine processes'' $S_{t}$ and $S_{A_{j},a_{j},b}$. 

\subsection{SLE estimates}

 We know that the expected Minkowski content of a chordal SLE$_2$ curve is finite almost surely, at least if the domain is bounded and not too rough. We need a similar
 estimate for $\mu^{\operatorname{rad}}$ near the bulk point.

\begin{lemma}\label{radialcontent} There exists $c < \infty$ such that
if $\check \gamma$ is 
a radial SLE$_2$ path from $\check a$ to $0$,  $T_{r,s}$
denotes the $(5/4)$-Minkowski content of $\gamma
\cap \{s < |z| < r \}$, and $T_r = T_{0+,r}$, then
\begin{equation}  \label{dec2.1}
  \E[T_r] \leq c \, r^{5/4} . 
  \end{equation}
\end{lemma}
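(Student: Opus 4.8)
The plan is to reduce the bulk estimate \eqref{dec2.1} to the known one-point function (Green's function) estimate for radial $\SLE_2$ together with the expected-content formula. Recall that for $\SLE_\kappa$ with $d = 1 + \kappa/8$ (here $d = 5/4$), the natural parametrization is characterized by $\E[\Cont(\gamma \cap V)] = \int_V G(z)\, dA(z)$ where $G(z) = G_{D,a,0}(z)$ is the (normalized) $\SLE$ Green's function, i.e., the limit of $\epsilon^{d-2}\, \Prob\{\dist(z,\gamma) \le \epsilon\}$ up to a lattice-independent constant. So by Fubini it suffices to show
\[
\int_{\{|z| < r\}} G_{\check D, \check a, 0}(z)\, dA(z) \leq c\, r^{5/4},
\]
uniformly in the relevant domains. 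The main input is the pointwise bound $G_{\check D, \check a, 0}(z) \leq c\, |z|^{d-2} = c\, |z|^{-3/4}$ for $z$ near $0$, which then gives $\int_{\{|z|<r\}} |z|^{-3/4}\, dA(z) = c \int_0^r \rho^{-3/4}\, \rho\, d\rho = c\, r^{5/4}$, as desired.

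So the real work is the estimate $G_{\check D, \check a, 0}(z) \leq c\, |z|^{-3/4}$. First I would pass from radial $\SLE_2$ aimed at $0$ to whole-plane $\SLE_2$ through $0$, or more simply use the radial $\SLE_2$ Green's function in $\DD$ from a boundary point to the center, for which an explicit formula (or sharp two-sided bounds) is available: the radial Green's function at the center behaves like a constant times $|z|^{-3/4}$ as $z \to 0$, with the constant depending only on $\kappa$. Then I would transfer this to $\check D$ via the conformal map $\psi: \DD \to \check D$ with $\psi(0) = 0$. Under the conformal covariance of the $\SLE$ Green's function, $G_{\check D, \check a, 0}(z) = |\psi'(\psi^{-1}(z))|^{-(2-d)}\, |\psi'(\psi^{-1}(0))|^{\beta}\, G_{\DD, \psi^{-1}(\check a), 0}(\psi^{-1}(z))$ for the appropriate boundary exponent $\beta$ (for $\kappa = 2$ the boundary scaling exponent is $b = (6-\kappa)/(2\kappa) = 1$). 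Since $\psi$ extends analytically across the boundary when $D$ has analytic boundary, and since the domains $\check D$ converge to $D$ in the Carathéodory sense with uniform control, the derivative factors are bounded above and below by constants uniform in $N$; and for $z$ in a neighborhood of $0$, $|\psi^{-1}(z)| \asymp |z|$, so the bound $G_{\DD,\cdot,0}(w) \le c\,|w|^{-3/4}$ pulls back to $G_{\check D,\check a,0}(z) \le c\,|z|^{-3/4}$.

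The step I expect to be the main obstacle is making the estimate \emph{uniform} in $\check D$ (equivalently in $N$) and in the boundary point $\check a$. The Carathéodory convergence $\check D \to D$ alone is not quite enough; one needs that the maps $\psi_N: \DD \to \check D_N$ and their derivatives are controlled uniformly near $0$ and near the boundary. This follows because $D$ has analytic boundary, so the covering map of $D$ extends analytically past $\partial D$, and the approximating union-of-squares domains $\check D_N$ are Hausdorff-close to $D$ with a quantitative rate ($O(N^{-1})$); standard distortion estimates then give $|\psi_N'|$ bounded above and below on compact subsets and uniformly nondegenerate at the basepoints. This is exactly the kind of conformal-map estimate already used in \cite{LV_LERW_natural} (cf. the remark following Theorem~\ref{thm:main.intro2}), so I would cite that machinery rather than redo it. A secondary point is that one should verify the expected-content formula and the Green's function bound hold for radial $\SLE_2$ and not just chordal; this is part of the construction of the natural parametrization for radial $\SLE$ and can be quoted, but if a clean reference is unavailable, it can be derived from the chordal case using the absolute continuity \eqref{nov8.3} restricted away from $0$, combined with a summation over dyadic annuli $\{2^{-k-1} r < |z| \le 2^{-k} r\}$ where the Radon-Nikodym derivative $M^{\SLE}_\tau$ is bounded.
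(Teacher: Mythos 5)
Your proposal is correct and follows essentially the same route as the paper: reduce $\E[T_r]$ to $\int_{r\DD} G^{\operatorname{rad}}\, dA$ via the expected-content formula, compare $G_{\check D}^{\operatorname{rad}}(z;\check a,0)$ with $G_{\DD}^{\operatorname{rad}}(z;1,0)$ using conformal covariance (with uniformity in $N$ from the analytic boundary), establish the pointwise bound $G_{\DD}^{\operatorname{rad}}(z;1,0) \le c|z|^{-3/4}$ near $0$, and integrate. The one place you defer to a reference where the paper actually does the work is the pointwise bound: the paper plugs in the explicit formula for the radial SLE$_2$ Green's function from \cite{AKL} and uses the deterministic conformal radius bound $g'_T(0)^{-1} \le 4|z|$ under the two-sided radial law to extract the $|z|^{-3/4}$ decay, rather than citing it as known.
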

\begin{proof}
We have
\begin{align*}
\E[T_{r}] & = \int_{r\mathbb{D}}G_{\check D}^{\operatorname{rad}}(z; \check{a}, 0) dA(z) \le c \int_{r\mathbb{D}}G_{\mathbb{D}}^{\operatorname{rad}}(z; 1,0) dA(z), 
\end{align*}
where $G_{D}^{\operatorname{rad}}(z,a,0)$ denotes the Green's function for radial SLE$_{2}$ in $D$ from $a$ to $0$. From \cite{AKL} we have that
\begin{equation}\label{radial-green}
G_{\mathbb{D}}^{\operatorname{rad}}(z; 1,0)  = c \, e^{2y(2-d)}[\sinh y \cosh y]^{d-2 + \beta}|\sin (x+iy)|^{-\beta} \E^{*}_{z}[g'_{T}(0)^{q}].  
\end{equation}
Here in the special case $\kappa=2$, the boundary exponent $\beta=3$, $d=5/4$, and $q=-3/4$, the cylindrical coordinates $(x,y)$ are defined by $z = e^{-2y + 2ix}$, and the expectation is with respect to two-sided radial SLE in $\mathbb{D}$ from $1$ through $z$ with Loewner maps $(g_{t})$ and with $T$ being the time (in the radial parametrization) at which the path reaches $z$. Since the conformal radius satisfies $g'_{T}(0)^{-1} \le 4 |z|$ under $\mathbb{P}^{*}_{z}$,  we can see that \eqref{radial-green} implies
\[
G_{\mathbb{D}}^{\operatorname{rad}}(z; 1,0) \le c |z|^{-3/4}, 
\]
as $|z| \to 0$, and integrating this around $0$ gives the stated bound.
\end{proof}
 \begin{prop}  \label{nov30.lemma1}
 There exists $c < \infty$ such
 that the following holds. 
 Let $s \leq 1/4$ and $\sigma_s = \inf\{t: |\check \gamma(t)|
  = s\}$.  Then, except for an event of probability at
  most $c \, s^{1/8}$, we have \[t_\gamma- \sigma_s \leq   s^{1/8}\]
  and \[|\check \gamma(t)| \leq s^{1/2}\] for $t \geq \sigma_s$.
 \end{prop}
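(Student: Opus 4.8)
The plan is to bound the two events in \eqref{dec2.1}'s aftermath using Lemma~\ref{radialcontent} together with standard distortion estimates for radial SLE$_2$ in the capacity (or Minkowski-content) parametrization. First I would observe that, since $\check\gamma$ is parametrized by $5/4$-dimensional Minkowski content, the quantity $t_\gamma - \sigma_s$ is exactly the content $T_s$ of $\check\gamma \cap s\mathbb{D}$, and Lemma~\ref{radialcontent} gives $\E[T_s] \leq c\, s^{5/4}$. A Markov/Chebyshev inequality then yields
\[
\Q\{t_\gamma - \sigma_s > s^{1/8}\} \leq \frac{\E[T_s]}{s^{1/8}} \leq c\, s^{5/4 - 1/8} = c\, s^{9/8} \leq c\, s^{1/8},
\]
which handles the first claim with a large margin to spare.

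For the second claim I would control the probability that $\check\gamma$ ever travels from $s\mathbb{D}$ out past distance $s^{1/2}$ after time $\sigma_s$. Here I would use the description of radial SLE$_2$ as a Loewner flow driven by $\sqrt 2 B_t$ in, say, $\mathbb{D}$ (after conformally mapping $\check D$ to $\mathbb{D}$, which only distorts distances by bounded factors away from the boundary), and the fact that once the tip enters $s\mathbb{D}$ the remaining curve is, by the domain Markov property, a radial SLE$_2$ in the slit domain targeting $0$. The key input is a one-point / hitting estimate: the probability that a radial SLE$_2$ curve from a point at distance $\asymp s$ from the origin reaches distance $s^{1/2}$ before being absorbed at $0$ is at most a power of $s$ (indeed comparable to $(s/s^{1/2})^{\text{something}} = s^{\text{something}/2}$), which can be extracted either from the radial SLE Green's function / derivative estimates of \cite{AKL} used in the previous lemma, or from a direct estimate using that $\arg g_t(0)$-type martingales are bounded. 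Summing over dyadic scales between $s$ and $s^{1/2}$ keeps the total bounded by $c\,s^{1/8}$.

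Concretely, I would let $\xi_k = \inf\{t : |\check\gamma(t)| \geq 2^k s\}$ for $\sigma_s \le \xi_0 < \xi_1 < \cdots$ and bound $\Q(\xi_k < t_\gamma \mid \xi_{k-1} < t_\gamma)$ by a fixed power of the ratio of the current distance to the target distance, using the scale invariance of radial SLE and the Beurling-type/harmonic-measure estimate for the flow. Choosing the number of scales so that $2^K s \approx s^{1/2}$, i.e.\ $K \approx \tfrac12 \log_2(1/s)$, the product telescopes and one finds that the probability of ever exceeding $s^{1/2}$ is at most $c\, s^{a}$ for some $a > 1/8$, absorbed into the $c\, s^{1/8}$ error. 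Combining with the first estimate via a union bound gives the proposition.

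The main obstacle is making the second step — the ``escape'' estimate after $\sigma_s$ — genuinely uniform in the slit domain $\check D \setminus \check\gamma_{\sigma_s}$, whose boundary can be rough near the tip. I expect to handle this by working in the uniformizing coordinate where the domain is $\mathbb{D}$ (or $\mathbb{H}$) and the target is the origin, where the radial Loewner equation with driving term $\sqrt2 B_t$ makes the estimate a clean computation about the modulus $|g_t(0)|$-type process and $\arg$-process; the conformal map back to $\check D$ only distorts by bounded factors on the relevant annuli since everything takes place at distance $\gtrsim s \gg 0$ from $\partial \check D$ (for $s \leq 1/4$ and $\check D$ close to a fixed analytic domain). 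Alternatively one can cite the relevant radial SLE derivative estimates underlying \eqref{radial-green} directly, since those already encode precisely the probability of the two-sided path reaching a given scale.
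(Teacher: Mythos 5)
Your first step contains a genuine error: the identity $t_\gamma - \sigma_s = T_s$ (with $T_s$ the Minkowski content of $\check\gamma \cap s\mathbb{D}$) is false, and the inequality actually goes the wrong way for what you need. Since $\check\gamma$ is parametrized by content, $t_\gamma - \sigma_s$ equals the content of the tail piece $\check\gamma[\sigma_s, t_\gamma]$. But nothing forces that tail to stay inside $s\mathbb{D}$ — after first touching the circle of radius $s$ the curve can exit and wander back out before eventually reaching $0$. What you do know is $\check\gamma \cap s\mathbb{D} \subset \check\gamma[\sigma_s, t_\gamma]$ (before $\sigma_s$ the path is outside $s\mathbb{D}$), hence $T_s \leq t_\gamma - \sigma_s$. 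So Markov applied to $\E[T_s] \leq c s^{5/4}$ bounds $T_s$, not $t_\gamma - \sigma_s$, and your first claim does not follow.

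The two estimates are therefore not independent as you treat them; the escape estimate must come \emph{first}, and is the crucial ingredient for bounding $t_\gamma - \sigma_s$. Once you know that, outside an event of probability $O(s^{3/4})$, the tail $\check\gamma[\sigma_s,t_\gamma]$ stays inside $s^{1/2}\mathbb{D}$, you get $t_\gamma - \sigma_s \leq T_{s^{1/2}}$ (the tail is then a subset of $\check\gamma \cap s^{1/2}\mathbb{D}$), and now Lemma~\ref{radialcontent} gives $\E[T_{s^{1/2}}] \leq c\, s^{5/8}$, so Markov yields the first claim with plenty of room. For the escape estimate itself, the paper simply cites the known results on continuity of radial SLE at the endpoint (Lawler; Lawler--Field), which directly give the $O(s^{3/4})$ bound; your proposed dyadic/scale-invariance argument is plausible in spirit but is not spelled out enough to verify (in particular, you need to justify that the conditional escape probability at each dyadic scale is bounded by a fixed constant less than one uniformly over the slit domain, which is essentially the content of those references), and there is no need to rederive it.
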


 \begin{proof} 
 By the estimates of \cite{lawler_continuity,
  lawler_field} the probability
   that $\check \gamma$ leaves $s^{1/2}\mathbb{D} $
  after time $\sigma_s$  is
  $O(s^{3/4})$.  On the event that
  \[\gamma[\sigma_s,t_\gamma] \subset s^{1/2}\mathbb{D},\]
  we have $t_\gamma - \sigma_s \leq T_{s^{1/2}}$
  where $T_r$ is as in \eqref{dec2.1}.
Since by Lemma~\ref{radialcontent}, $\E[T_{s^{1/2}}]
 \leq c\, s^{5/8}$, Markov's inequality implies $\Prob\{T_s \geq s^{1/2} 
  \}  \leq c\, s^{1/8}.$ 
 \end{proof}

Recall that if
$f_t: \check D \setminus \check \gamma_t \rightarrow \Disk$ is the
conformal transformation with $f_t(0) = 0, f_t(\check \gamma(t)) = 1$,
then $\theta_t$ is defined by $f_t(\check b) = e^{2i\theta_{t}}$ and $S_{t} = S(\check \gamma_{t})= \sin \theta_{t}$; this definition works for any Loewner curve. We will also need the fact that for SLE, $S_t$ is unlikely to get
near zero in a finite amount of time.

 \begin{lemma}  \label{lemma1.nov9}
 Let $D,a,b$ be given. For every $r > 0$, there
 exists $\delta_r > 0$ such that
 if $\gamma$
 is a radial  SLE$_2$ path from $\check a$ to $0$
 in $\check D$, then
\begin{equation}  \label{nov9.2}
  \Prob\left\{\min_{0 \leq t \leq \sigma_r}
          S_t \leq \delta_r\right\} \leq r,
          \end{equation}
          where $\sigma_r = \inf\{t: |\check \gamma(t)|  \leq r\}$.
 \end{lemma}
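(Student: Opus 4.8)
The plan is to reduce \eqref{nov9.2}, via conformal invariance and the Koebe distortion theorem, to a statement about the one--dimensional ``argument process'' of radial SLE$_2$ over a \emph{bounded} capacity horizon, and then to observe that the canonical martingale attached to that process is a Brownian motion run under a clock that advances at least at unit speed, so it stays in an almost surely bounded window on any fixed time interval. First I would map $\check D$ conformally onto $\Disk$ with $0\mapsto 0$ and $\check a\mapsto 1$; then $\check\gamma$ is a standard radial SLE$_2$ in $\Disk$ aimed at $0$ (capacity parametrization), with marked boundary point $e^{i\vartheta_0}$ the image of $\check b$, and $S_t=\sin\theta_t$ with $2\theta_t=\vartheta_t$, where $\vartheta_t\in(0,2\pi)$ solves the radial Loewner equation $d\vartheta_t=\cot(\vartheta_t/2)\,dt-\sqrt2\,dB_t$. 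Since $\partial D$ is analytic near $a\ne b$ and $\check D_N\to D$ in the Carath\'eodory sense with $\operatorname{crad}(\check D_N,0)$ bounded above and below uniformly in $N\ge10$, one has $\vartheta_0=\vartheta_0^{(N)}\in[\eta_0,2\pi-\eta_0]$ for a fixed $\eta_0>0$. As $S_t$ depends only on the trace $\check\gamma_t$, $\min_{0\le t\le\sigma_r}S_t$ is the minimum of $\sin(\vartheta_{\cdot}/2)$ over capacity times up to $\tau_r$, the capacity of $\check\gamma_{\sigma_r}$ from $0$; by Koebe the conformal radius of $\check D\setminus\check\gamma_{\sigma_r}$ from $0$ is at least $\tfrac14\dist(0,\check\gamma_{\sigma_r})=r/4$ whenever $r<r_0:=\inf_{N\ge10}\operatorname{crad}(\check D_N,0)$, so $\tau_r\le T_0:=\log(4C_D/r)$ deterministically, with $C_D:=\sup_{N\ge10}\operatorname{crad}(\check D_N,0)<\infty$. (For $r\ge r_0$ one takes $\delta_r=\delta_{r_0}$, since $\sigma_r\le\sigma_{r_0}$.) Thus it suffices to bound $\Prob\{\min_{0\le t\le T_0}S_t\le\delta\}$ in the capacity parametrization, with $T_0$ depending only on $D$ and $r$.

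Next I would remove the dependence on $\vartheta_0$. Writing $\epsilon=2\arcsin\delta$, the event in question is $\{\vartheta_t\notin(\epsilon,2\pi-\epsilon)\ \text{for some}\ t\le T_0\}$. Since $\vartheta$ is a one--dimensional diffusion, pathwise comparison in the initial condition (driving $\vartheta$ by the same $B$), together with the symmetry $\vartheta\mapsto2\pi-\vartheta$, $B\mapsto-B$, gives, for every $\vartheta_0\in[\eta_0,2\pi-\eta_0]$,
\[ \Prob\Big\{\min_{0\le t\le T_0}S_t\le\delta\Big\}\ \le\ 2\,\Prob_{\eta_0}\big\{\vartheta_t\le\epsilon\ \text{for some}\ t\le T_0\big\}, \]
so it is enough to treat the single fixed starting point $\vartheta_0=\eta_0$, and this is uniform in $N$.

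For the main step, set $\Phi_t=\cot(\vartheta_t/2)$. It\^o's formula shows $\Phi$ is a local martingale, $d\Phi_t=\tfrac1{\sqrt2}(1+\Phi_t^2)\,dB_t$, hence $d\langle\Phi\rangle_t=\tfrac12(1+\Phi_t^2)^2\,dt\ge\tfrac12\,dt$; in particular $t\mapsto\langle\Phi\rangle_t$ is continuous, strictly increasing and unbounded, and $\Phi_t=W_{\langle\Phi\rangle_t}$ for a standard Brownian motion $W$. Moreover $S_t=(1+\Phi_t^2)^{-1/2}$, so $S_t\le\delta$ is equivalent to $|\Phi_t|\ge L_\delta:=(\delta^{-2}-1)^{1/2}$. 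Since $\langle\Phi\rangle_{\cdot}$ maps $[0,T_0]$ continuously onto $[0,\langle\Phi\rangle_{T_0}]$,
\[ \Prob_{\eta_0}\Big\{\min_{0\le t\le T_0}S_t\le\delta\Big\}=\Prob\Big\{\sup_{0\le t\le T_0}|\Phi_t|\ge L_\delta\Big\}\ \le\ \Prob\big\{\langle\Phi\rangle_{T_0}>K\big\}+\Prob\Big\{\sup_{0\le u\le K}|W_u|\ge L_\delta\Big\}. \]
Now $\langle\Phi\rangle_{T_0}$ is almost surely finite (the continuous process $\Phi$ is bounded on $[0,T_0]$), so given $r$ I first fix $K$ with $\Prob\{\langle\Phi\rangle_{T_0}>K\}\le r/8$; with $K$ fixed, the reflection principle bounds the second term by a constant times $\exp\!\big(-(L_\delta-\cot(\eta_0/2))^2/2K\big)$, which tends to $0$ as $\delta\to0$, so I fix $\delta=\delta_r$ making it at most $r/8$. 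Combining the displays yields $\Prob\{\min_{0\le t\le\sigma_r}S_t\le\delta_r\}\le r$, uniformly in $N\ge10$ because $T_0$, $\eta_0$ and $\Phi_0=\cot(\eta_0/2)$ are $N$--independent.

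I expect the only genuine subtlety --- and the reason a one-line ``martingale plus Doob's inequality'' argument does not suffice --- is that $\langle\Phi\rangle_{T_0}$ is \emph{not} uniformly integrable: it has infinite expectation, reflecting that over an \emph{infinite} capacity horizon $S_t$ does come arbitrarily close to $0$ almost surely (equivalently, there is no bounded-below superharmonic function for $\vartheta$ blowing up at a single endpoint of $(0,2\pi)$, since $\vartheta$ is recurrent near both endpoints). Hence one must genuinely exploit (i) the Koebe bound $\tau_r\le T_0$, which makes the horizon finite once $r$ is fixed, and (ii) control of $\langle\Phi\rangle_{T_0}$ only in the a.s.-finite sense used above; the remaining steps (the comparison principle for $\vartheta$, the It\^o computation, and the Brownian maximal estimate) are routine.
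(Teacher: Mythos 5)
Your proof is correct and follows the same reduction as the paper's sketch: map to $\Disk$, bound the radial capacity at time $\sigma_r$ by $T_0=O(\log(1/r))$ using the monotonicity of the conformal radius, and then estimate the radial Bessel process $\vartheta_t$ over that finite horizon; the paper calls this last step ``standard,'' and your time-change of the local martingale $\Phi_t=\cot(\vartheta_t/2)$ combined with the reflection principle (plus the comparison/symmetry reduction to a fixed starting point and the remark that $\langle\Phi\rangle_{T_0}$ is a.s.\ finite but not integrable) is a valid and careful way to carry it out. One small mislabeling worth fixing: the direction you need, $\operatorname{crad}(\Omega,0)\ge\dist(0,\partial\Omega)$, follows from the Schwarz lemma (equivalently, monotonicity of conformal radius under domain inclusion), not from Koebe's $1/4$-theorem, which gives the reverse bound $\operatorname{crad}(\Omega,0)\le 4\,\dist(0,\partial\Omega)$; your claimed inequality $\operatorname{crad}\ge\tfrac14\dist$ is true but weaker than Schwarz and misattributed, and the paper itself invokes Schwarz here.
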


\begin{proof}[Sketch of proof.] This is standard so we will give a sketch. Consider the process parametrized by the radial
parametrization centered at $0$: at time $s$ of the radial parametrization, the
conformal radius is
$         e^{-2s} \, r_{\check{D}}.$
 Let $f_{t}$ be as just above.

The Loewner equation shows that $ \theta_t$ satisfies
  the radial Bessel SDE
  \[     d  \theta_t = \cot   \theta_t \, dt + d W_t , \]
  where $W_t$ is a standard Brownian motion.  The 
  Schwarz lemma
  implies that the conformal radius at
  time $\sigma_r$ at least $r$, and hence $\sigma_r$
  corresponds to time at most $-\frac 12 \, \log r + O(1)$.
  Hence, this becomes an estimate about the radial Bessel
  SDE which is standard.  Indeed, we could 
   give a more quantitative version of \eqref{nov9.2}, but
   we will not need it. 
    \end{proof}
      We will later work with stopping times that correspond to mesoscopic capacity increments, we need to be able to stop the paths slightly
 later than the stopping time $\sigma_{2r}$ and we will want
 to say that this time is smaller than $\sigma_{r}$ given some geometric information.  For this, we will
 use the following deterministic lemma. 
 
 \begin{lemma}\label{lemma2.nov9} Let $D,a,b$ be as above. For every $0 < r < 1/4$ and $\delta >0$
  there exists $c >0$ depending on $D,a,b,r,\delta$ such
 that the following is true. Suppose $ \gamma$
 is a simple radial  Loewner curve from $a$ to
 $0$ in $ D$.   If
 \[  \min_{0 \leq t \leq \sigma_r}
          S_{D_t}(0; \gamma(t),b) \geq  \delta,\]
 then the following estimates hold:
 \begin{enumerate}
 \item[i)]{
 \[         h_{ D_{\sigma_r}}(0,   b)
    \geq  c; \]}
    \item[ii)]{
 \[  \hcap[\gamma_{\sigma_r} \setminus \gamma_{\sigma_{2r}}]
   \geq c;\]}
   \item[iii)]{
   \[
   \dist_I(\gamma_{\sigma_{r}}, b) \ge c,
   \]
   }
   \end{enumerate}
where $\dist_I$ denotes interior distance.
\end{lemma}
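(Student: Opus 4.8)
The plan is to prove the three assertions of Lemma~\ref{lemma2.nov9} one at a time, all of them following from compactness of the family of admissible Loewner curves together with the hypothesis $\min_{0 \le t \le \sigma_r} S_{D_t}(0; \gamma(t), b) \ge \delta$. First I would record the easy geometric consequences of the hypothesis. The Schwarz lemma gives $\dist(0, \gamma_{\sigma_r}) \ge c(r)$ and $\dist(0, \gamma_{\sigma_{2r}}) \ge c(r)$ for suitable constants, since the conformal radius of $D_{\sigma_r}$ seen from $0$ is at least $r$ (and at least $2r$ at time $\sigma_{2r}$) up to the fixed factor coming from $r_D$; in particular $\gamma_{\sigma_r}$ stays outside a fixed neighborhood of $0$. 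The sine condition $S_{D_t} \ge \delta$ means that in the coordinate $F_t : D_t \to \HH$ with $F_t(\gamma(t)) = 0$, $F_t(b) = \infty$, the image of the ``slit tip'' stays at angular distance $\ge \arcsin\delta$ from the real axis direction toward $b$; equivalently, $\gamma$ and $b$ do not get close in the interior sense relative to $D_t$. This is what I would use to get a quantitative lower bound on the harmonic measure / Poisson kernel.

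For (i): $h_{D_{\sigma_r}}(0, b)$ is the Poisson kernel; by \eqref{nov8.1} (or just by definition) it is comparable to $S_{D_{\sigma_r}}^{-2}$ times the boundary Poisson kernel $h_{\partial D_{\sigma_r}}(a, b)$ or, more directly, one can bound it below using the fact that $0$ is at definite distance from $\partial D_{\sigma_r}$ and that $b$ lies on an analytic boundary arc that has not been swallowed. Concretely I would write $h_{D_{\sigma_r}}(0, b) = |\nabla_n G_{D_{\sigma_r}}(0, \cdot)|$ at $b$ and lower-bound it by transferring to $\HH$ via $F_{\sigma_r}$: $h_{D_{\sigma_r}}(0, b)$ equals $\Im F_{\sigma_r}(0)$ times a derivative factor, and the sine bound plus the lower bound on $\dist(0, \gamma_{\sigma_r})$ keeps $\Im F_{\sigma_r}(0)$ and the relevant derivative bounded below. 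The cleanest route is a compactness argument: the set of simple radial Loewner curves $\gamma$ from $a$ to $0$ in $D$ with $\sigma_r$ well-defined and $\min_{t \le \sigma_r} S_{D_t} \ge \delta$ is precompact (in, say, the Carathéodory topology on the complementary domains $D_{\sigma_r}$), $h_{D_{\sigma_r}}(0,b)$ is positive and continuous on this set, and it cannot degenerate to $0$ precisely because that would force either $0 \to \partial$ (excluded by Schwarz) or $b$ to be swallowed / approached by $\gamma$ (excluded by the sine bound). Hence it is bounded below by a constant $c = c(D, a, b, r, \delta)$.

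For (ii) and (iii): (iii) is essentially a restatement that the interior distance $\dist_I(\gamma_{\sigma_r}, b)$ in $D$ does not collapse; again the sine bound is exactly the statement preventing the tip of $\gamma$ from approaching $b$ along the interior, and analyticity of $\partial D$ near $b$ upgrades closeness-in-$D$ to closeness-in-$\overline D$, so a compactness/continuity argument of the same flavor gives $\dist_I(\gamma_{\sigma_r}, b) \ge c$. For (ii), the half-plane capacity increment $\hcap[\gamma_{\sigma_r} \setminus \gamma_{\sigma_{2r}}]$ (capacity measured in $D_{\sigma_{2r}}$ mapped to $\HH$) is bounded below because between times $\sigma_{2r}$ and $\sigma_r$ the curve travels from the circle $|z| = 2r$ to the circle $|z| = r$, a macroscopic distance, while staying in a region where the conformal map $F_{\sigma_{2r}}$ to $\HH$ has derivative bounded above and below (using the sine bound and the distance bounds to control $F_{\sigma_{2r}}$ on the relevant compact region); pushing forward, the image curve has diameter bounded below and stays at bounded-below imaginary part for a definite amount, forcing $\hcap \ge c$. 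The main obstacle is making the compactness argument airtight — in particular checking that the relevant family of domains $(D_t)_{t \le \sigma_r}$ is genuinely precompact and that the functionals $h_{D_{\sigma_r}}(0,b)$, $\dist_I(\gamma_{\sigma_r}, b)$, and the capacity increment are continuous on it with respect to a topology in which no degeneration is possible under the hypotheses; once that framework is set up, each of the three bounds is a short continuity argument rather than an explicit estimate. Since the lemma only asks for existence of the constant (with no rate), the non-quantitative compactness approach is appropriate and I would not attempt an explicit bound.
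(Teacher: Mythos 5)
Your high-level reading of the hypotheses is right (Schwarz controls $\dist(0,\gamma_{\sigma_r})$, the sine condition protects $b$), but the compactness framework you propose has a real gap, which you yourself flag as ``the main obstacle'' and then defer. Carath\'eodory convergence of the domains $D_{\sigma_r}$, being a kernel topology seen from the interior point $0$, does \emph{not} control boundary behavior near $b$: one can have $D^{(n)}_{\sigma_r}\to D^\infty$ in the Carath\'eodory sense while $h_{D^{(n)}_{\sigma_r}}(0,b)\to 0$, because a slit can creep toward $b$ without being seen from $0$. So $h_{D_{\sigma_r}}(0,b)$ is simply not a continuous functional on that space, and the non-degeneration claim is not a soft continuity fact. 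To make the limit argument close you would first need to know that $b$ has a fixed protected neighborhood in all $D_{\sigma_r}$ under consideration --- that is, you need (iii) \emph{before} (i) --- and (iii) itself requires a quantitative input (the sine bound translated into a boundary separation), which is where the real work is. So the compactness scaffolding does not actually save you from producing the estimates; it just relabels them as ``verifying continuity and non-degeneration.''

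The paper's proof is direct and avoids all of this. For (i) it observes $h_{D_{\sigma_r}}(0,b)/h_D(0,b)=|g'_{\sigma_r}(e^{2i\theta_0})|$ and runs the radial Loewner ODE for the derivative: the sine lower bound keeps the driving point at distance $\ge\delta$ from $e^{2i\theta_0}$, so the logarithmic derivative is bounded by $C_\delta$ and one gets $|g'_{\sigma_r}|\ge ce^{-rC_\delta}$. For (iii) it is a one-line Beurling estimate: if $\dist_I(\gamma_{\sigma_r},b)=\ee$, a crosscut of diameter $\le\ee$ separates $0$ from $b$, so $S\lesssim\ee^{1/2}$, and $S\ge\delta$ forces $\ee\ge c$. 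For (ii) it uses the Brownian bubble / $h$-process representation of $\hcap$ viewed from $b$, together with the first estimate to bound the derivative prefactors and the Schwarz lemma to see that the image of $\gamma[\sigma_{2r},\sigma_r]$ in $\mathbb D$ runs from $\partial\mathbb D$ to a disk of radius bounded away from $1$, so the bubble hitting probability is bounded below. Your alternative sketch for (ii) --- distortion control on $F_{\sigma_{2r}}$ plus a lower bound on the imaginary part of the image endpoint --- is a legitimate substitute for the bubble argument provided you prove the distortion bound; diameter of the image alone is not enough, and you correctly add the imaginary-part point. In short: the right ingredients are visible in your sketch, but as written, the proposal replaces three short explicit estimates with a compactness argument whose two load-bearing steps (continuity at $b$, non-degeneration) are exactly where those estimates are needed, so the work has not actually been done.
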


\begin{proof} We use the radial
parametrization from $0$ as above.    As before, let
$f_t = g_t \circ f_0$, and if we write $\sigma_{r}$ for the first time $\gamma$ reaches the circle of radius $r$ about $0$, we have
\[           \frac{h_{D_{\sigma_r}}(0, b)}
   {h_D(0,  b)} = |g_{\sigma_r}'(e^{2i\theta_0})|.\]
An easy estimate using the Loewner equation
shows that there exists $C_\delta <\infty$
such that if $S_s \geq \delta$ for $0 \leq s \leq \sigma_r$, then
\begin{equation}\label{g-estimate}      |g_{\sigma_r}'(e^{2i\theta_0})|  \geq c e^{-rC_\delta}.\end{equation}

This gives the first estimate.  For the second estimate, recall that half-plane capacity (viewed from $b$) can be given as a hitting measure for Brownian bubbles  attached at $b$. More precisely, up to a constant depending on the map to $\Half$ we use to measure capacity, the capacity of $\gamma[\sigma_{2r}, \sigma_{r}]$ equals  
\[
 |f'_{0}(b)|^{2}|g_{\sigma_{2r}}'(\zeta)|^{2} \lim_{\ee \to 0+} \frac{1}{\ee} \, \Prob^{\zeta,\ee}\left (B \cap f_{\sigma_{2r}}(\gamma[\sigma_{2r}, \sigma_{r}]) \neq \emptyset \right),
\]
where $\zeta =e^{2i\theta_0}$ and $\Prob^{\zeta, \ee}$ denotes the law of a Brownian $h$-process $B$ in $\mathbb{D}$ from $(1-\ee)\zeta$ to $\zeta$ and we also used the covariance rule. But since $f_{\sigma_{r}}(\gamma[\sigma_{2r}, \sigma_{r}])$ is a curve in $\mathbb{D}$ connecting the boundary with a circle of radius at most a constant strictly smaller than $1$, the limit in the last display is bounded away from $0$. Therefore \eqref{g-estimate} gives the result. 

For the last estimate, suppose that $\ee > 0$ and let $\tau$ be the first time $\gamma$ is at interior distance $\ee$ from $b$. Then there is a crosscut $\beta$ of $D \setminus \gamma_{\tau}$ connecting $\gamma(\tau)$ with $b$ and $\diam \beta \le \ee$. Moreover, $\beta$ cuts $D \setminus \gamma_{\tau}$ into two components, one of which contains $0$. By the Beurling estimate, the harmonic measure of $\beta$ from $0$ in $D \setminus \gamma_{\tau}$ is at most a constant times $\ee^{1/2}$ and the same bound (allowing a different constant) holds for $S_{D_\tau}(0;\gamma(\tau),b)$. 

\end{proof}
  
\subsection{LERW estimates}  \label{lemmasec}

We start with an estimate that does not require any smoothness
of the boundary of $A$ and then we use it for the  approximations $\check D$
of the analytic domain $ D$. For $r > 0$, let $C_r = \{z \in \Z^2: |z| < r\}$. 

\begin{lemma}   \label{nov30.lemma2} There exists $c < \infty$ such that the
following holds.  Suppose $A$ is a simply connected finite
subset of $\Z^2$ containing the origin and $a \in \partial_e A$.
If $z \in A$, let $d_z = d_{z,A} = \dist(z,\partial A)$ and let
$d = d_0$. 
\begin{itemize}
\item  If $z \in C_{d/4}$, then
\[     \Prob_{A,0,a}^{\operatorname{rad}}\{z \in \eta\}
   \leq  c\,  |z|^{-3/4}. \]
\item  If $z \in A \setminus C_{d/4}$, then
\begin{equation}  \label{sep13.4}
    \Prob_{A,0,a}^{\operatorname{rad}}\{z \in \eta\}
   \leq  c\, \frac{H_A(z,a)}{H_A(0,a)}
     \, G_A(z,0)\, d_z^{-3/4}. 
     \end{equation}
   \end{itemize}
\end{lemma}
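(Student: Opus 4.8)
The plan is to prove Lemma~\ref{nov30.lemma2} by the standard first-passage decomposition for loop-erased random walk, using the fact that $z \in \eta$ if and only if the random walk path (from $0$ to $a$, conditioned to stay in $A$) hits $z$ \emph{after} its last visit to the loop-erasure up to $z$, which is encoded cleanly by the well-known identity expressing $\Prob\{z \in \LE(S[0,\xi_A])\}$ in terms of Green's functions and escape probabilities. Concretely, I would first recall (see \cite[Chapter~11]{LL}, or the arguments in \cite{Masson}) that for radial LERW from $0$ to $a$ in $A$,
\[
\Prob_{A,0,a}^{\operatorname{rad}}\{z \in \eta\} = \frac{G_A(0,z)\, \Es_A(z)\, H_{A}(z,a)}{H_A(0,a)},
\]
where $\Es_A(z)$ is the probability that a random walk started at $z$ leaves $A$ without returning to $z$ after a loop-erasure bookkeeping, i.e. the "escape probability" that appears in the loop-erased one-point function; more precisely one writes $\Prob\{z\in\eta\}$ as a sum over the two disjoint pieces of the walk (before and after the visit to $z$ that survives loop-erasure) and uses the loop-measure cancellation exactly as in the chordal case. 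The point is that $\Es_A(z) \le 1$ trivially, so the right-hand side is bounded by $c\, G_A(0,z)\, H_A(z,a)/H_A(0,a)$; this is almost the second bullet, except that $G_A(0,z)$ rather than the quantity $G_A(z,0) d_z^{-3/4}$ appears. The refinement $\Es_A(z) \le c\, d_z^{-3/4}$ is the LERW escape-probability estimate: a random walk from $z$ has probability $O(d_z^{-3/4})$ of escaping to distance of order $d_z$ without its past being hit, which is the discrete analogue of the $5/4$-dimensional one-point exponent $2 - 5/4 = 3/4$; this follows from the separation/quasi-multiplicativity estimates for LERW one-point functions (see \cite{Masson}, \cite{BLV}, or \cite[Chapter~11]{LL}). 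Combining, $G_A(0,z)\,\Es_A(z) \le c\, G_A(z,0)\, d_z^{-3/4}$ and using $G_A(0,z)=G_A(z,0)$ gives \eqref{sep13.4}, the second bullet.

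For the first bullet, when $z \in C_{d/4}$ we have $d_z \ge d_0 - |z| \ge \tfrac34 d$, so both $z$ and $0$ are deep inside $A$ at scale comparable to $d$. Here I would instead compare directly to the whole-plane (or large-disk) one-point function. The cleanest route: by the second bullet, $\Prob_{A,0,a}^{\operatorname{rad}}\{z\in\eta\} \le c\, \frac{H_A(z,a)}{H_A(0,a)} G_A(z,0) d_z^{-3/4}$; now use the Harnack-type comparison $H_A(z,a) \le C\, H_A(0,a)$, valid because $z$ and $0$ both lie in $C_{d/4}$ and so are at distance $\ge \tfrac34 d \ge 3|z|$ from $\partial A$ — the Poisson kernels viewed from $0$ and from $z$ at a common boundary edge $a$ differ by a bounded factor (discrete boundary Harnack on a ball of radius $d$ around the origin, with both points in the concentric ball of radius $d/4$). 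And $G_A(z,0) \le c\log(d/|z|) \le$ ... — wait, that log is not bounded; instead I use the sharper fact that the \emph{product} $G_A(z,0)\,d_z^{-3/4}$ is not what controls things at small $|z|$: rather one should re-run the escape-probability bound centered at scale $|z|$. That is, the surviving visit to $z$ requires the walk to escape from $z$ to distance $\sim |z|$ (reaching the "rest of the path," which passes near $0$ at distance $|z|$) without hitting its past, giving the factor $|z|^{-3/4}$ directly, together with $G_A(0,z)$ being $O(\log(d/|z|))$ absorbed against... Actually the clean statement is: the one-point function for LERW from $0$ satisfies $\Prob\{z\in\eta\} \asymp G_A(0,z)\,|z|^{-3/4}$ up to the boundary term, and for $|z|$ small $G_A(0,z) \le c\,\log(d_z/|z|)$, so this is $\le c\,|z|^{-3/4}\log(1/|z|)$, not quite $|z|^{-3/4}$. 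So the correct argument: use the known sharp asymptotics $\Prob_{A,0,a}^{\operatorname{rad}}\{z\in\eta\}\le c\,\operatorname{Es}(0,z)\,|z|^{-3/4}$ where $\operatorname{Es}(0,z)$ is the two-point escape term which is $\le c$ when $0$ is an endpoint (no loop to escape from near $0$), eliminating the logarithm; reference \cite{BLV} or \cite{Masson}.

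The main obstacle I expect is precisely pinning down the first bullet with the clean exponent $|z|^{-3/4}$ and \emph{no logarithmic correction}: the naive Green's-function bound $G_A(0,z) \le c\log(d/|z|)$ produces an extra $\log$, so one genuinely needs the sharp LERW one-point estimate (the "boundary is an endpoint" version, where the growing endpoint of the walk sits at $0$ so there is no escape factor there) to kill it. The cleanest fix is to invoke \cite{BLV} (or the relevant estimate in \cite{Masson}, \cite{LV_LERW_natural}) directly for the probability that $z$ lies on the LERW from $0$, which gives $\Prob\{z\in\eta\}\le c\,|z|^{d-2}=c\,|z|^{-3/4}$ when $|z| \le d/4$, uniformly in $A$ and the target $a$, and then the second bullet is a routine consequence of the same circle of ideas plus boundary Harnack for the $H_A(z,a)/H_A(0,a)$ ratio. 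So in summary: (i) write the LERW one-point function via loop-soup cancellation as Green's function $\times$ escape probability $\times$ boundary Poisson ratio; (ii) bound the escape probability by $c\, d_z^{-3/4}$ for the second bullet; (iii) for the first bullet, use the deep-interior sharp one-point bound $\le c|z|^{-3/4}$ from \cite{BLV}/\cite{Masson}, noting $z,0$ deep inside $A$ so the boundary target $a$ contributes only a bounded Harnack factor.
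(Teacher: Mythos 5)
Your decomposition of the one-point probability as (Green's function)$\,\times\,$(escape probability)$\,\times\,$(Poisson-kernel ratio) is essentially the same last-visit-to-$z$ decomposition the paper uses, and your bullet-two argument (escape probability $\leq c\,\min\{|z|,d_z\}^{-3/4}$ by the LERW non-intersection estimate, combined with the trivial bound $G_{A\setminus\hat\eta^a}\leq G_A$) matches the paper's. So the second bullet is in good shape, modulo replacing your vague references by the concrete non-intersection input the paper draws from \cite{LV_LERW_natural} (Lemma~6.13 and Proposition~6.15 there).

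The first bullet is where you have a genuine gap. You correctly notice that the naive product $G_A(0,z)\cdot \Es_A(z)$ with $\Es_A(z)\leq c|z|^{-3/4}$ leaves an uncancelled factor $G_A(0,z)\asymp \log(d/|z|)$, but you never identify the mechanism that kills the logarithm; instead you fall back on citing a ``deep-interior sharp one-point bound'' from \cite{BLV} or Masson. That citation does not do the job: \cite{BLV} is a sharp estimate for \emph{chordal} LERW (both endpoints on $\partial A$), not for the radial LERW from the interior point $0$ that appears here, and the paper's own introduction explicitly says that the radial analogue of the \cite{BLV} one-point estimate is not available --- which is precisely why the present paper proves this lemma from scratch. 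The mechanism the paper actually uses is elementary and worth knowing: after splitting the conditioned walk at $z$ into a piece $\omega^a$ from $z$ to $a$ (whose loop-erasure near $z$ is $\hat\eta^a$, a connected set of diameter at least $r\asymp|z|$ through $z$) and a piece $\omega^0$ from $z$ to $0$, the measure of the continuation of $\omega^0$ to the origin is $G_{A\setminus\hat\eta^a}(\zeta,0)$ for a neighbour $\zeta$ of $z$, and this is bounded by $G_{\Z^2\setminus\hat\eta^a}(\zeta,0)\leq c$ \emph{uniformly}, because the removed slit has diameter comparable to $|\zeta|$. In other words, the $\log(d/|z|)$ in $G_A(0,z)$ is not ``absorbed'' by a sharper escape estimate --- it is never there in the first place once you compute the Green's function in the slit domain rather than in $A$. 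Combined with the Harnack estimate $H_A(z,a)\asymp H_A(0,a)$ for $z\in C_{d/4}$ (which you do use correctly) and the $r^{-3/4}\asymp|z|^{-3/4}$ non-intersection bound, this closes the first bullet without any external one-point asymptotic. Your write-up should either supply this slit-domain Green's function argument or point to a reference that really does cover radial LERW from the bulk with an endpoint on the boundary; as it stands the step is a citation to a result that is not known to hold in the form you need.
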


\begin{proof}
Fix $z \in A$ and let 
\[  r = r_z = \frac 14 \cdot \min\{d_z, |z| \}, \]
\[    D_{r}(z) =   C_r + z = \{w: |w-z| < r \}, \;\;\;\;
  D_{2r}(z) = C_{2r} + z =  \{w: |w-z| < 2r \}.\]

Let $\omega^a,\omega^0$ be two independent 
simple random walks starting
at $z$ ending  at $a,0$, respectively, and otherwise
staying in $A$.  The ($p$-) measure of the set of such $\omega^a$ is
$H_A(z,a)$ and the measure of such $\omega^0$ is 
$G_A(z,0)$. 
\begin{itemize}
\item Let $\tilde \omega^0$ be $\omega^0$
with the first edge removed (so that it starts at
a nearest neighbor of $z$). 
\item Let $\eta^a = LE(\omega^a)$.
\end{itemize}
Then $\hat P_{A,0,a}\{z \in \eta\}$ equals the $p$-measure
of $(\omega^a,\omega^0)$  satisfying
\[     \tilde \omega^0 \cap \eta^a = \eset. \]
Let $\hat \eta^a$ be $\eta^a$ stopped at the first time
that it reaches $\partial D_{r}(z)$ and similarly for $\hat \omega^{0}$ using $\tilde \omega^{0}$.
 Then   $\hat P_{A,0,a}\{z \in \eta\}$
is bounded above by 
the measure
of $(\omega^a,\omega^{0} )$  satisfying
\[
   \hat  \omega^{0}  \cap \hat \eta^a = \eset. 
\] 

By \cite[Lemma 6.13]{LV_LERW_natural}, the normalized
probability measure on $\hat \eta^a$ is comparable to
the measure obtained by taking LERW in the larger set $D_{2r}(z)$ from
$z$ to $\partial D_{2r}(z)$ (and stopping when exiting $D_{r}(z)$). If we let $\pi_{2r}$ denote the latter
probability, then the measure of $\omega^a$ that
produce $\hat \eta^a$ is comparable to $H_A(z,a)
 \, \pi_{2r}(\hat \eta^a)$.
 Using 
  \cite[Proposition 6.15]{LV_LERW_natural}, we see that
 that
 \[   \E_{\pi_{2r}} \left[\Prob\{ \hat  \omega^0 \cap \hat \eta^a = \eset
  \mid \eta^a\}\right] \le c\, r^{-3/4}. \]

 For a given $\tilde \omega^0$, the measure of possible extensions
 to walks ending at the origin and avoiding $\hat \eta^a$
  is $G_{A  \setminus \hat \eta^a}(\zeta,0)$ where
 $\zeta$ is the endpoint of  $\tilde \omega^0$ which is a neighbor of $z$.  We now consider
 two cases.
 \begin{itemize}
 \item  If $ z \in A \setminus C_{d/4}$, we bound
  $G_{A  \setminus \hat \eta^a}(\zeta,0)$ above by
  $G_A(\zeta,0)$ and use the Harnack inequality to see
  that this is comparable to $G_A(z,0)$.
 \item  If $z \in C_{d/4}$ we bound
 $G_{A  \setminus \hat \eta^a}(\zeta,0)$ above by
 $G_{\Z^2  \setminus \hat \eta^a}(\zeta,0)$. 
 Since $\hat \eta^a$ is a set of diameter at least
 $r$ and $|\zeta| \geq |z|/2$,
 we can use standard arguments to show that
 \[  G_{\Z^2  \setminus \hat \eta^a}(\zeta,0) < c.\]
 In this case, we also see that the Harnack inequality implies
 that $H_A(z,a) \asymp H_A(0,a)$.
 \end{itemize}
 
\end{proof}

\begin{prop}  There exists $c = c_D < \infty$ such that
for all $(A_n,a_n,b) \in \whoknows_n(D)$,
\[    \sum_{z \in A} \Prob_{A_n,0,a_n} \{z \in \eta\}
  \leq c \, n^{5/4}. \]
\end{prop}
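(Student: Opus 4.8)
The plan is to bound the expected number of points visited by radial LERW by integrating the pointwise hitting probabilities from Lemma~\ref{nov30.lemma2} over the two regimes $C_{d/4}$ and $A \setminus C_{d/4}$, where $d = \dist(0,\partial A)$. Here $n$ is the mesh-scale parameter, so for $(A_n, a_n, b) \in \whoknows_n(D)$ the inradius $d$ is comparable to $n$ (up to a constant depending on $D$), and all points $z \in A$ satisfy $|z| \le \diam A \le c\, n$.

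\textbf{The near-origin contribution.} For $z \in C_{d/4}$, Lemma~\ref{nov30.lemma2} gives $\Prob_{A_n,0,a_n}\{z \in \eta\} \le c\,|z|^{-3/4}$. Summing over lattice points,
\[
 \sum_{z \in C_{d/4}} \Prob_{A_n,0,a_n}\{z \in \eta\} \le c \sum_{0 < |z| < d/4} |z|^{-3/4} \le c \int_0^{cn} \rho^{-3/4}\, \rho \, d\rho \le c\, n^{5/4},
\]
since the lattice sum over an annulus of radius $\rho$ is comparable to the area element $\rho\, d\rho$ and $\int_0^{R} \rho^{1/4}\, d\rho = \tfrac{4}{5} R^{5/4}$. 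This already produces the exponent $5/4$, which is the content of the claim.

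\textbf{The bulk contribution.} For $z \in A \setminus C_{d/4}$, Lemma~\ref{nov30.lemma2} gives the bound in terms of $\tfrac{H_{A_n}(z,a_n)}{H_{A_n}(0,a_n)}\, G_{A_n}(z,0)\, d_z^{-3/4}$. The key structural input is the last-exit / last-visit decomposition of simple random walk from $0$ killed on exiting $A_n$: the expected number of visits to $z$ is exactly $G_{A_n}(z,0)$, and moreover summing $G_{A_n}(z,0)$ against the harmonic measure factor reconstitutes a random walk path from $0$ to $a_n$. Concretely, I would use that
\[
 \sum_{z \in A_n} G_{A_n}(z,0) \, \frac{H_{A_n}(z,a_n)}{H_{A_n}(0,a_n)}
\]
is, up to the normalization $H_{A_n}(0,a_n)$, the expected time spent by a random walk bridge from $0$ to $a_n$, which is $O(n^2)$ after accounting for the normalization being bounded below by $c\, n^{-1}$ (a boundary Poisson kernel estimate, valid since $a_n$ is a fixed boundary edge of a domain approximating the analytic domain $D$). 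Since in this regime $d_z \asymp \dist(z, \partial A_n)$ and one can split further: away from the boundary $d_z \gtrsim n$ so $d_z^{-3/4} \lesssim n^{-3/4}$ and the sum is $\lesssim n^{2} \cdot n^{-3/4} = n^{5/4}$; near the boundary one uses the standard estimate $G_{A_n}(z,0) \le c\, d_z$ (Green's function vanishes linearly) together with $\sum H_{A_n}(z,a_n) \le c\,n$ so that $\sum_z d_z \cdot d_z^{-3/4}\cdot H_{A_n}(z,a_n)/H_{A_n}(0,a_n)$ is dominated by a sum over boundary layers of widths $2^{-j}$, each contributing a geometric amount, again totalling $O(n^{5/4})$.

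\textbf{Main obstacle.} The delicate point is the bulk estimate: one must control $\sum_z G_{A_n}(z,0)\, H_{A_n}(z,a_n)\, d_z^{-3/4}$ uniformly over all admissible $(A_n, a_n, b)$, and the factor $d_z^{-3/4}$ is singular at $\partial A_n$. The resolution is to use that $G_{A_n}(z,0) H_{A_n}(z,a_n)$ already decays like (distance to boundary)$^{1+}$ near $\partial A_n$ — precisely, $G_{A_n}(z,0) \le c_D\, d_z$ and $H_{A_n}(z,a_n) \le c_D\, d_z \, |z-a_n|^{-2}$ away from $a_n$ itself (with a separate trivial bound in the $O(1)$-neighborhood of $a_n$), so $d_z \cdot d_z^{-3/4} = d_z^{1/4}$ is integrable. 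Decomposing $A_n$ into dyadic boundary layers $L_j = \{z : 2^{-j}n < d_z \le 2^{-j+1}n\}$ and using $\#L_j \le c_D\, 2^{-j} n^2 / \text{(something)}$ via a standard covering/Beurling argument, the contribution of $L_j$ is bounded by a constant times $n^{5/4}\, 2^{-j/4}$, and summing the geometric series in $j$ gives the result. I would cite the relevant Green's function and Poisson kernel estimates from \cite{LL} for these bounds rather than reprove them.
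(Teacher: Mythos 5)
Your overall strategy mirrors the paper's: you split the sum into a near-origin region where the bound $\Prob\{z \in \eta\} \le c|z|^{-3/4}$ from Lemma~\ref{nov30.lemma2} integrates directly to $n^{5/4}$, and a bulk/near-boundary region where you invoke the bound $\frac{H_A(z,a)}{H_A(0,a)}\,G_A(z,0)\,d_z^{-3/4}$ and sum over layers indexed by distance to $\partial A$. The paper does exactly this, with unit-width layers $\{k-1 \le d_z < k\}$ and the harmonic measure identity $H_A(0,a) = \sum_{w \in \partial U_k} H_{U_k}(0,w) H_A(w,a)$ to get $\sum_{k-1 \le d_z < k} H_A(z,a) \le c\,n\,H_A(0,a)$, rather than your pointwise bound $H_A(z,a) \le c\, d_z|z-a|^{-2}$; both devices yield the same layer estimate, so this is a cosmetic difference.

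However, your near-boundary estimate has a genuine error that loses a factor of $n$. You write $G_{A_n}(z,0) \le c_D\, d_z$ and call it ``Green's function vanishes linearly.'' That cannot be right: the correct gambler's ruin bound, the one the paper states as \eqref{sep13.5}, is $G_A(z,0) \le c\, d_z/n$ for $z$ with $|z| \gtrsim n$. The Green's function does vanish linearly in $d_z$, but the slope is $\asymp 1/n$ because a walk started at distance $d_z$ from $\partial A$ must first reach scale $\asymp n$ (probability $\asymp d_z/n$) before it has any appreciable chance to hit the origin, and $G_A(z,0) = \Prob^z\{\text{hit }0\text{ before exiting}\}\,G_A(0,0)$. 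Without this $1/n$, your dyadic-layer computation gives a per-layer contribution $\asymp 2^{-5j/4} n^{9/4}$ rather than $2^{-5j/4} n^{5/4}$, and summing over $j$ produces $O(n^{9/4})$, which is off by a full factor of $n$. Your away-from-boundary sub-case ($d_z \gtrsim n$), by contrast, is fine: there the last-exit identity $\sum_z G_A(0,z) H_A(z,a) = \E^0[\tau\,\mathbf 1\{\text{exit at }a\}]$ together with $H_A(0,a)\asymp n^{-1}$ gives $\sum_z G(z,0) H(z,a)/H(0,a) \asymp n^2$, and $d_z^{-3/4} \le c\,n^{-3/4}$ closes it. So the gap is localized to the missing $1/n$ in the near-boundary Green's function bound; once inserted, your dyadic decomposition does recover the paper's layer estimate $\sum_{k-1\le d_z<k}\Prob\{z\in\eta\} \le c\,k^{1/4}$ and the proof goes through.
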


\begin{proof}  
We will prove a stronger result.  We first see that if $
|z| \leq n/2$, then $\Prob_{A_n,0,a_n} \{z \in \eta\}
 \leq c \, |z|^{-3/4}$, and hence for $m \leq n/2$,
\begin{equation}  \label{sep13.2}
   \sum_{m \leq |z| \leq m+1} 
 \Prob_{A_n,0,a_n} \{z \in \eta\} \leq c \, m^{1/4}.
 \end{equation}
 
 Let $U_k = \{z \in A:  d_z  \geq  k\}$.  Using the
 smoothness of $D$ and the gambler's ruin estimate, 
 we see that
\begin{equation}  \label{sep13.5}
         G_A(z,0) \leq  \frac{c \, k}{n}, \;\;\;
    z \in (\partial U_k) \setminus C_{n/2} \end{equation}
 Let $S$ be a simple random walk starting at the origin
 and let $T_k = \inf\{j: S_j \in U_k\}.$  Using smoothness
 of $\partial D$ and gambler's
 ruin again, we see that for $w \in \partial U_k$,
 $H_{U_k}(0,w) \asymp n^{-1}$.  Also, the strong Markov
 property implies that
 \[      H_A(0,a) =  \sum_{w \in \partial U_k }
        H_{U_k}(0,w) \, H_{A}(w,a).\] 
        Hence,
  \[ \sum_{w \in \partial U_k }
        H_{A}(w,a)  \leq c\,n\,  H_A(0,a).\]
        If $k-1 \leq d_z < k$, then there exists $w$
     within distance two of $z$ with $d_{w} \geq k$
     and hence either $z$ or a nearest neighbor of $z$
     is in $\partial U_k$. Using the Harnack inequality,
   we can then conclude that
   \[      \sum_{k-1 \leq d_z < k }
        H_{A}(z,a)  \leq c\,n\,  H_A(0,a).\] 
 Hence, \eqref{sep13.4} and \eqref{sep13.5}
 imply that 
 \[  \sum_{k-1 \leq d_z < k } 
   \Prob_{A_n,0,a_n} \{z \in \eta\}
     \leq  
   c \, k^{ 1/4} . \]
 This and \eqref{sep13.2} give the estimate.
 \end{proof}
 
 \begin{prop} \label{radial-reg} There exists $c < \infty$ such
that the following holds.  Let $A$ be a finite, simply connected
domain including the origin, $a \in \partial_e A$,
 and  $n = \dist(0,
\partial A).$  Suppose $\eta= [\eta_0,\ldots,\eta_k] 
$ is a LERW from $a$ to $0$
in $A$,    $s \leq 1/2$, 
and we write  
\[  \eta = \eta^- \oplus \eta^+\]
 where $\eta^- =[\eta_0,
\ldots,\eta_m]$ and $m$ is the smallest index with $\eta_m
 \in C_{s^2n}$.    Then, except
for an event of probability at most $c s$,
\[  |\eta^+| \leq  s^{1/4} \, n^{5/4}, \;\;\;\;
\eta^+ \subset C_{  s n}.\]
\end{prop}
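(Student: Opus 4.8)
The statement splits into a ``length'' bound and a ``diameter'' bound, and the plan is to deduce the length bound from the diameter bound together with Lemma~\ref{nov30.lemma2}. We may assume $s^2n\ge 1$ (otherwise $C_{s^2n}=\{0\}$, so $\eta^+=[\,0\,]$ and there is nothing to prove) and $s\le 1/4$ (otherwise the claimed bound is vacuous for $c$ large). Then $sn\le n/4 = d_0/4$, so for every $z\in C_{sn}$ the first bullet of Lemma~\ref{nov30.lemma2} gives $\Prob_{A,0,a}^{\operatorname{rad}}\{z\in\eta\}\le c\,|z|^{-3/4}$, whence $\sum_{z\in C_{sn}}\Prob\{z\in\eta\}\le c\,(sn)^{5/4}$. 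On the event $\{\eta^+\subset C_{sn}\}$ one has $|\eta^+|\le\#\{z\in C_{sn}:z\in\eta\}$, so Markov's inequality gives $\Prob\{|\eta^+|>s^{1/4}n^{5/4},\ \eta^+\subset C_{sn}\}\le c\,(sn)^{5/4}/(s^{1/4}n^{5/4})=c\,s$. Thus it remains to prove $\Prob\{\eta^+\not\subset C_{sn}\}\le c\,s$.

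For the diameter bound, note that if $\eta^+\not\subset C_{sn}$ then $\eta$ first reaches $C_{s^2n}$, then reaches $\partial C_{sn}$, then returns to $0$, so $\eta$ crosses the annulus $\{s^2n\le|z|<sn\}$ at least three times. It is convenient to reverse: the time-reversal of radial LERW from $a$ to $0$ is the loop erasure $\xi$ of a simple random walk $X$ from $0$ stopped on $\partial A$ and conditioned to exit through $a$, and crossing counts are preserved under reversal, so it suffices to bound the probability that $\xi$ crosses that annulus at least three times --- with the advantage that $\xi$ starts at $0$. Let $\sigma$ be the first time $\xi$ reaches $\{|z|\ge n/2\}$. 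By optional stopping applied to the positive harmonic function $z\mapsto H_A(z,a)$, the law of $\xi$ up to time $\sigma$ has Radon--Nikodym derivative $H_A(X_\sigma,a)/H_A(0,a)$ relative to the law of $\tilde\xi$, the loop erasure of an \emph{un}conditioned SRW from $0$ run to $\{|z|\ge n/2\}$; since $a$ lies at distance $\ge n$ from $0$ and $X_\sigma$ at distance $\ge n/2$ from $\partial A$, the Harnack inequality shows this derivative is bounded above and below by absolute constants.

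Now split the three-crossing event for $\xi$ according to whether all three crossings are completed before $\sigma$. The sub-event where they are is measurable with respect to $\xi$ up to time $\sigma$, so by the comparison above its probability is at most a constant times $\Prob\{\tilde\xi\text{ crosses the annulus at least }3\text{ times}\}$. Since $\tilde\xi$ ends on $\{|z|\ge n/2\}$, outside the annulus, this happens exactly when $\tilde\xi$ re-enters $C_{s^2n}$ after first reaching $\partial C_{sn}$ (the third crossing is then automatic, $\tilde\xi$ being bound for $\{|z|\ge n/2\}$); conditioning at that first visit to $\partial C_{sn}$ and applying the domain Markov property, the continuation is a loop-erased walk that must reach $C_{s^2n}$, a set shielded from $\partial C_{sn}$ by the connected arc of $\tilde\xi$ already traced from $0$ out to $\partial C_{sn}$, so the Beurling projection estimate (applied to the underlying walk) bounds this probability by $c\,(s^2n/sn)^{1/2}=c\,s^{1/2}$. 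On the complementary sub-event, $\xi$ exits $C_{n/2}$ and then returns to $C_{s^2n}$; applying the domain Markov property at $\sigma$, undoing the conditioning to exit at $a$ (which near $0$ only perturbs the walk by a bounded factor, since $H_A(\cdot,a)$ is slowly varying away from $a$), and using Beurling again with the arc $\xi[0,\sigma-1]$ joining a neighbourhood of $0$ to $\{|z|\ge n/2\}$ as the shield, one gets $c\,(s^2n/(n/2))^{1/2}\le c\,s$. Altogether $\Prob\{\eta^+\not\subset C_{sn}\}\le c\,s^{1/2}$, which already proves the proposition with $cs$ weakened to $cs^{1/2}$ --- more than enough for the way the estimate feeds into Proposition~\ref{prop:continuity}.

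The hard part --- and the only place one loses the sharp power --- is the three-crossing estimate for $\tilde\xi$ in the previous paragraph: the Beurling bound $s^{1/2}$ treats $\tilde\xi$ as an unerased walk, whereas the re-entry excursion into $C_{s^2n}$ is, with overwhelming probability, destroyed by the loop erasure unless it avoids the strand already drawn near $0$. Quantifying this ``escape'' saving is governed not by the Beurling exponent $1/2$ but by the sharp LERW non-intersection/escape probabilities of the kind used in the proof of Lemma~\ref{nov30.lemma2} (cf.\ \cite[Proposition~6.15]{LV_LERW_natural}), which supply the missing factor $\asymp (s^2n)^{-3/4}\times(\text{scale }sn)^{3/4}=s^{1/2}$; inserting these in place of Beurling is how I would recover the stated linear bound $c\,s$.
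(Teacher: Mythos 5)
Your length bound is exactly the paper's (Lemma~\ref{nov30.lemma2} plus Markov, on the event $\{\eta^+\subset C_{sn}\}$). For the diameter bound $\Prob\{\eta^+\not\subset C_{sn}\}\le cs$ the paper takes a shorter and more robust route: it does \emph{not} reverse the path. Instead it conditions on $\eta^-$ and notes that $\eta^-$ is a connected set joining $\partial A$ to $C_{s^2n}$, hence already crosses the annulus $\{s^2n\le|z|\le sn\}$ and therefore serves as a Beurling shield for \emph{both} legs of any excursion of $\eta^+$ to $\partial C_{sn}$ and back. Given $\eta^-$, $\eta^+$ is the loop erasure of a walk $\tilde\omega$ from $\eta_m$ to $0$ conditioned to avoid $\eta^-$; the numerator $\Prob\{\tilde\omega\not\subset C_{sn},\ \tilde\omega\subset A\setminus\eta^-\}$ is bounded by $O(\sqrt s)\cdot O(\sqrt s)\cdot q$ (out, back, then reach $0$), while the normalizer $\Prob\{\tilde\omega\subset A\setminus\eta^-\}$ is bounded below by $c\,q$ times the probability of the initial escape, using \cite[Lemma 6.9]{LV_LERW_natural}. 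This gives $c\,s$ with two Beurling factors and no escape-exponent input.

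Your reversal argument has a genuine gap in the deconditioning step. Your $\sigma$ is a stopping time for the loop erasure $\xi$, not for the walk $X$, and the initial piece $\xi[0,\sigma]$ is \emph{not} a function of $X$ stopped at a walk stopping time: loops completed after $X$ first leaves $C_{n/2}$ can erase earlier parts of $\xi$. Consequently the assertion that ``the law of $\xi$ up to time $\sigma$ has Radon--Nikodym derivative $H_A(X_\sigma,a)/H_A(0,a)$'' does not follow from the Doob $h$-transform for $X$, and the symbol $X_\sigma$ is not well-defined. The comparison you want is true, but should be established through the LERW domain Markov property (conditioning on the SAW $\xi[0,\sigma]$ and comparing the ratio of the resulting escape/Poisson-kernel factors by Harnack), which is a different and more delicate computation than the one you wrote.

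Separately, your single Beurling application only yields $s^{1/2}$, which you acknowledge; your observation that $cs^{1/2}$ is still sufficient for Proposition~\ref{prop:continuity} is correct (with $s\asymp r^{1/2}$ one gets failure probability $O(r^{1/4})\le O(r^{1/8})$). However, the speculation at the end about recovering the sharp $cs$ through LERW non-intersection exponents is not what the paper does and is unnecessary: the second $\sqrt s$ comes for free once one works forward from $\eta^-$ rather than in reverse, because the \emph{outgoing} leg of $\eta^+$'s excursion is also shielded by $\eta^-$.
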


\begin{proof} We write $\Prob,\E$ for probabilities
and expectations under $\Prob_{A,a,0}.$
Let $T$ be the total number of points in $C_{ s n}$ visited
by $\eta$.  By Lemma \ref{nov30.lemma2}, we have for $z
\in C_{sn}$, $\Prob\{z \in \eta\} \leq c \, |z|^{-3/4}$,
and hence 
$  \E[T] \leq c \, ( s n)^{5/4}  $
and  
\[   \Prob\{T \geq s^{1/4} \, n^{5/4} \} \leq c \, s.\]
On the event $\{\eta^+ \subset C_{ s n}\}$, we have
$|\eta^+| \leq T$.  Therefore it suffices to show that
$   \Prob\{\eta^+ \not\subset C_{sn}\} \leq c \, s.$
We will show the stronger fact,
\[   \Prob\{\eta^+ \not\subset C_{sn} \mid \eta^-\} \leq c \, s.\]

Given $\eta^-$, the distribution of $\eta^+$
 can be given as follows:
 \begin{itemize}
 \item  Take a simple random walk $\omega$ starting at $\eta_{m}$ and stop
 it when it reaches the origin. Let $\tilde \omega$ be $\omega$ with
 the first edge removed stopped at the first visit to the origin. 
 \item  Condition on the event that  $\tilde \omega  \subset A
 \setminus \eta^-$.
 \item Erase loops from the path.
 \end{itemize}
 Hence it suffices to show that 
 \[   \Prob\{ \tilde \omega 
 \not\subset 
   C_{s n} ,   
 \tilde \omega  \subset A
 \setminus \eta^- \mid \eta^-\}
  \leq c \, s \, \Prob\{ \tilde \omega  \subset A
 \setminus \eta^- \mid \eta^-\}.\]
 
 Let $q$ be the maximum over $w \in C_{3s^2n/4} \setminus
 C_{s^2n/4}$ of the probability that a random walk starting
 at $w$ reaches the origin before leaving $A \setminus
 \eta^m$.  Standard estimates show that this comparable
 to $[\log (s^2 n)]^{-1}$ and the minimum 
 over $w \in C_{3s^2n/4} \setminus
 C_{s^2n/4}$ is also comparable
 to this. 
 Let $\omega'$ denote $\tilde \omega$ stopped at the first
 time the path gets distance $s^2 n /2$ from $\eta_{m}$,
 and let $z$ denote the endpoint of $\omega'$.  By 
 \cite[Lemma 6.9]{LV_LERW_natural},
 \[  \Prob\{|z| \leq 2s^2 n/3 \mid   \omega'
\cap \eta^s \neq \eset\} \geq c , \]
and hence
\[     \Prob\{ \tilde \omega  \subset A
 \setminus \eta^- \mid \eta^-\}  \geq c \,q \, \Prob\{
 \tilde \omega
\cap \eta^- \neq \eset\}.\]
The Beurling estimate implies that the probability that
a walk starting in $C_{2s^2n}$ reaches $\partial C_{sn}$ and then
returns to $C_{2 s^2 n}$ without hitting $\eta^-$ is $O(s)$ (it
is $O(\sqrt s)$ to reach $\partial C_{sn}$  and given that,
 another
$O(\sqrt s)$ to return to $C_{2 s^2 n}$).
 Given that it does this, the probability of reaching the origin
before leaving $A \setminus \eta^s$ is bounded above by $q$.
 Hence,
\begin{eqnarray*}
   \Prob\{ \tilde \omega 
 \not\subset 
   C_{s n} ,   
 \tilde \omega  \subset A
 \setminus \eta^- \mid \eta^-\} & \leq  & 
 c \, s \, q \, \Prob\{
 \tilde \omega
\cap \eta^- \neq \eset\}\\
& \leq & c \,s\,  \Prob\{ \tilde \omega  \subset A
 \setminus \eta^- \mid \eta^-\} .
 \end{eqnarray*}

\end{proof}
 We end with the analogue of Lemma~\ref{lemma1.nov9}. 
 \begin{lemma}\label{aug7.lemma3}
  Let $D,a,b$ be given. For every $r > 0$, there
 exists $\delta_r > 0$ such that
 if $\check \eta$
 is scaled LERW in $\check{D}$ from $\check a $ to $0$, then for all $N$ sufficiently large,
\begin{equation} 
  \Prob\left\{\min_{0 \leq t \leq \sigma_r}
          S_{\check D \setminus \check \eta_{t}, \check \eta(t), \check b} \leq \delta_r\right\} \leq r,
          \end{equation}
          where $\sigma_r = \inf\{t: |\check \eta(t)|  \leq r\}$.
 \end{lemma}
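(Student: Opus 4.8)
The plan is to deduce this from its SLE counterpart, Lemma~\ref{lemma1.nov9}, via convergence of radial LERW to radial SLE$_2$ in the Carath\'eodory (equivalently, driving-function) topology up to the time $\sigma_r$. Note first that both the quantity $S_{\check D\setminus\check\eta_t,\check\eta(t),\check b}$ and the time $\sigma_r$ depend only on the trace of $\check\eta$ up to $\sigma_r$ as an unparametrized Loewner curve, so the natural parametrization plays no role and Carath\'eodory convergence suffices. Such convergence, stopped at $\sigma_r\wedge\tau$ with $\tau=\inf\{t:S_{\check D\setminus\check\eta_t,\check\eta(t),\check b}\le\delta\}$ for a fixed small $\delta$, is available from the ingredients already assembled: by the Proposition of Section~\ref{sect:rvsclerw} the law of radial LERW from $\check a$ to $0$ stopped at $\sigma_r\wedge\tau$ is obtained from chordal LERW from $\check a$ to $\check b$ by weighting with the martingale $M^{\LERW}$, and on this stopped event \eqref{nov8.4}, Lemma~\ref{lerwrnlemma} and \eqref{eq:imags} identify $M^{\LERW}$ with $(1+O_{r,\delta}(N^{-u}))$ times a bounded continuous functional of the curve converging, as $N\to\infty$, to the radial/chordal SLE weight $M^{\SLE}$ of \eqref{nov8.3}. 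Since chordal LERW from $\check a$ to $\check b$ converges to chordal SLE$_2$ by \cite{LV_LERW_natural} (a fortiori in the Carath\'eodory topology), weighting passes to the limit and radial LERW stopped at $\sigma_r\wedge\tau$ converges weakly to radial SLE$_2$ from $\check a$ to $0$ stopped at the same time. (Alternatively one may simply invoke the known Carath\'eodory-level convergence of radial LERW to radial SLE$_2$.)

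Granting this, fix $r\in(0,1/4)$. Applying Lemma~\ref{lemma1.nov9} with $r/2$ in place of $r$ yields $\delta_1=\delta_1(r)>0$ with $\Prob\{\min_{0\le t\le\sigma_r}S_t\le 3\delta_1\}\le r/2$ for radial SLE$_2$ $\check\gamma$ from $\check a$ to $0$; we set $\delta_r=\delta_1$ and run the comparison of the previous paragraph with $\delta=\delta_1/2$, so that the convergence covers the range relevant to the event $\{\tau_{\delta_1}<\sigma_r\}$ (for $N$ large, since one LERW step changes the relevant geometry by a vanishing amount). The functional sending a Loewner curve to $\min_{0\le t\le\sigma_r}S_t$ is continuous in the Carath\'eodory topology: away from the degenerate regime it is the usual continuity of $t\mapsto f_t(\check b)$ under Carath\'eodory convergence, and when $S_t$ is small the curve either approaches $\check b$ in the interior metric or nearly disconnects $\check b$ from $0$ (compare the Beurling argument in the proof of Lemma~\ref{lemma2.nov9}), both stable; the only issue, a possible atom of the limit law at the level $\{\min_t S_t=\delta_1\}$, is bypassed by the gap between the levels $\delta_1$ and $3\delta_1$. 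Hence for $N$ large $\Prob\{\min_{0\le t\le\sigma_r}S_{\check D\setminus\check\eta_t,\check\eta(t),\check b}\le\delta_1\}\le\Prob\{\min_{0\le t\le\sigma_r}S^{\check\gamma}_t\le 3\delta_1\}+r/2\le r$, as claimed.

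The main obstacle is making this transfer uniform in $N$ in the regime where the sine process is close to $0$, since there the dependence of $S_t$ on the curve degenerates. The remedy is a bootstrap along capacity time: as long as both $S_t^{\check\gamma}$ and $S_t^{\check\eta}$ exceed $\delta_1$, the point $\check b$ stays at interior distance bounded below (in terms of $\delta_1$) from both curves, by the Beurling estimate used in the proof of Lemma~\ref{lemma2.nov9}; there $t\mapsto f_t(\check b)$ is genuinely uniformly continuous in the curve, so on the coupling event --- Carath\'eodory closeness up to $\sigma_r$, which holds off a set of probability tending to $0$, hence $\le r/2$ for $N$ large --- the two sine processes cannot separate by more than $\delta_1$ before either first drops to $\delta_1$. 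Thus if the LERW sine process reaches $\delta_1$ before $\sigma_r$, then either the SLE one reached $3\delta_1$ before $\sigma_r$ or the coupling failed. The remaining points --- that the weight comparison \eqref{nov8.4} is valid only while the sines stay above $\delta$, and the $o(1)$ control of a single LERW step --- are routine, and assembling the estimates gives the lemma.
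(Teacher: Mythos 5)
The paper's own proof of Lemma~\ref{aug7.lemma3} is a single sentence: cite Lemma~\ref{lemma1.nov9} together with the known convergence of radial LERW to radial SLE$_2$ in the radial parametrization \cite{LSW04}. That is exactly the ``alternative'' you mention at the end of your first paragraph, so at heart your approach is the same. Your \emph{primary} route is, however, genuinely different: you recover the needed stopped radial convergence from the chordal convergence of \cite{LV_LERW_natural} by weighting with $M^{\LERW}$ up to the time $\sigma_r\wedge\tau$, where $\tau$ caps the sine at $\delta$. That is a nice self-contained variant, and the key point --- stopping at $\tau$ keeps the Radon--Nikodym factor under control \emph{without} presupposing the sine bound, so there is no circularity with Section~\ref{sect:coupling}, where this lemma is later used --- is a real observation worth making explicit. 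The trade-off is that you are implicitly re-running a piece of the Section~\ref{sect:coupling} construction, whereas the paper just leans on the independent result of \cite{LSW04}. One point to tighten: continuity, under driving-function/Carath\'eodory convergence, of both $\sigma_r$ (first hitting of a circle) and $t\mapsto \min_{s\le t}S_s$ is not pointwise but only holds off an exceptional event of small limit-law probability; your remark about the absence of an atom at the level $\delta_1$ gestures at this but should be stated as such, and the bootstrap in the third paragraph (comparing the two sine processes while both exceed $\delta_1$) is the right way to justify it, though as written it stops short of a quantitative modulus-of-continuity estimate.
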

 \begin{proof}
 This follows from Lemma~\ref{lemma1.nov9} and the fact that we know that radial LERW converges to radial SLE$_{2}$ in the radial parametrization \cite{LSW04}.  \end{proof}

\section{Coupling: proof of Proposition~\ref{prop:main.intro}}\label{sect:coupling} In this section we construct the coupling of Proposition~\ref{prop:main.intro}. 
Recall the setup:  we have a fixed domain $D$ with analytic boundary with $0 \in D$ and with $a\in \partial D$ given. We assume without loss in generality that $1 \le r_D(0) \le 2$. Fix an auxiliary boundary point $b \in \partial D$ such that if $F_0: D \to \Half$ is the conformal transformation with $F_0(a) = 0, \, F_0(b) = \infty$, $\Im F_0(0) = 1$, then  $|\Re F_0(0) | \le 1$. The first three conditions can always be fulfilled, but the last puts a restriction on $b$ to not be ``too close'' to $a$.

For $N=1,2, \ldots$ let $\check{D}_N$ be a discrete approximation of $D$ as above, and we always choose $\check{a}_N \in \partial \check{D}_N$ to correspond to a boundary edge of $A_N$ closest to $N \cdot a $ and $\check{b}_N$ is chosen to approximate $b$ with the same geometric condition fulfilled: $F_N: \check{D}_N \to \Half, \, F_N(\check a_N)=0, \, F_N(\check b_N)=\infty$ and $\Im F_N(0) = 1, |\Re F_N(0)| \le 1$.  We will write $\check a = \check{a}_N, \check D = \check{D}_N$ etc, and keep the $N$ implicit in the notation from now on. When we measure capacity of curves in $\check D$ it is with respect to the map $F=F_N$.

We will construct our coupling by weighting the coupling of \cite{LV_LERW_natural, LV_lerw_chordal_note} by the Radon-Nikodym derivatives discussed in Section~\ref{sect:radialvschordal}. Because of this we need to be careful about measurability properties and use the Markovian property of the chordal coupling and we need to consider events on which the Radon-Nikodym derivatives are controlled. Let us review the chordal coupling. Fix  $K < \infty$. 
In \cite{LV_LERW_natural, LV_lerw_chordal_note} we found a sequence $\ee_N \to 0$ as $N \to \infty$ (which depends on $D,a,b,K$) and for each $N$ sufficiently large we constructed $(\check \eta, \check \gamma)$ on the same probability space $(\Omega, \Prob)$ so that the following holds.
\begin{itemize}
\item{The marginal distribution on $\check \eta$ is LERW in $\check D$ from $\check a$ to $\check b$, parametrized as in \eqref{lerw-param}. Write $\F_{n}^{\LERW}$ for the filtration of generated by the LERW up to step $n$.
\begin{itemize}
\item{
To be more precise, we start from
a probability space on which is defined an infinite
sequence of uniform $[0,1]$ random variables $\{U_j\}$. 
Then $\F_{n}^{\LERW}$ can be taken to be the filtration generated by
by $U_1,\ldots,U_n$. The probabilities for
the at most three possible moves on the $(n+1)$:st
step of the LERW are determined by the walk up to time
$n$ (that is, is $\F_n^\LERW$-measurable) and the random
variable $U_{n+1}$ is used to make the choice in the usual manner. We also let $\tilde \F^{\LERW}_{n}$ denote the future
sigma algebra generated by $U_{n+1},U_{n+2},\ldots$. }
\end{itemize}}	
\item{The marginal distribution on $\check \gamma$ is chordal SLE$_2$ in $\check{D}$ from $\check a$ to $\check b$, parametrized by $5/4$-dimensional Minkowski content. The SLE is generated by a standard Brownian motion $W_t$ (and then reparametrized) and we write $\F_t^\SLE$ for the natural filtration of this Brownian motion, equivalently the filtration of $\check \gamma$ run up to capacity $t$. We let $\tilde \F_t^\SLE = \sigma\{W_{t+s} - W_t: \,s \ge 0 \}$ be the associated future sigma algebra. }
\item{
There exists a universal $v > 0$ such that if
\[
h=\lfloor N^{-v} \rfloor, 
\]
then here is a sequence of $\F^\LERW$-stopping times $\{m_{k}\}$ and a sequence of $\F^\SLE$-stopping times $\{\tau_k\}$ where $k=0, 1,\ldots, \lceil 2K/ h \rceil $ and an event $V$ for which the following holds.
\begin{itemize}
\item{If $T_{k} = c_{*}^{-1}m_{k}/N^{5/4}$ and $t_k = \Theta(\tau_k)$ is the $5/4$-dimensional Minkowski content of $\check \gamma$ run up to capacity $\tau_k$, then on the event $V$ we have
\[
\max_{k \le \lceil 2K/\delta \rceil }|T_{k} - t_{k}| \le \ee_{N}.
\]
Moreover, on $V$,
\[
\max_{k \le \lceil 2K/ h \rceil }|\hcap \check \gamma_{t_k} - kh| \le  \ee_N, \qquad \max_{k \le \lceil 2K/ h \rceil }|\check \eta(T_{k}) - \check \gamma(t_{k})| \le \ee_N.
\]
(The curve $\check \eta(t)$ is defined by linear interpolation for other times than $T_k$.)
}
\item{We have $\Prob(V) \ge 1-\ee_N$.}
\end{itemize}}
\item{The coupling has a Markovian property at the stopping times that can be phrased as follows. There is a filtration $\mathcal{G}_{k}$ such that  
\[  \F_{\tau_{k}}^\SLE \wedge  \F_{m_{k}}^\LERW \subset 
 \G_k , \]
 \[       \tilde \F_{\tau_{k}}^\SLE \wedge
      \tilde \F_{m_{k}}^\LERW  \perp  \G_k.\]
 We allow $\G_{k}$ to contain extra randomness but it must be independent of the future.
 }
 
 \end{itemize}
We now consider this coupling. For $r > 0$, define the stopping times
\[\sigma_r^\SLE = \inf\{t \ge 0 : |\check \gamma (t)| \le r\}, \quad \sigma_r^\LERW = \min\{j \ge 0: |\check \eta_j| \le r \}.
\]
Given $\delta, \delta' > 0$, define the stopping times
\[   \xi = \inf\{t \ge 0: S_t \leq \delta \mbox{ or }
      |\check \gamma(t)| \leq r \},
       \] 
 \[   \xi' = \inf\{t \geq \xi: \hcap[\check \gamma_{t} ]
  =  
   \hcap[\check \gamma_\xi] + \delta'\}.\]
Using Lemma~\ref{lemma1.nov9} and Lemma~\ref{lemma2.nov9} we may choose $\delta,\delta',c'$ bounded away from $0$ (depending on $r$) such
that if $E$ is the event that
\[           \min_{0 \leq t \leq \xi'} S_t \geq \delta,\;\;\;\;   
        \dist(0, \check\gamma_{\xi'}) \geq r/2,\]
        \[  h_{\check D \setminus \check \gamma_{\xi'}}
       (0,\check b) \geq c' \,  h_{\check  D}
       (0,\check b),\]
then,
       \[\mu^{\rad} ( E ) \ge 1- r.\]       
       Moreover, if $N$ is sufficiently large, then $h < \delta'$.
For the remainder, we only consider $N$ this large.
 
Let $J$ be the minimum of $\lceil K/h
  \rceil$ and the first $k$ with $t_k > \xi$. (Recall that $t_k = \Theta(\tau_k)$.)
The  care in the coupling was taken in order
to guarantee
that $J$ is a stopping time for the coupled
processes.  Also,   
$t_J < \xi'$ since $h < \delta'$.   
  
 We define the measure $\Q^\SLE$ on $\G_{J}$ by the relation
 \[    d\Q^\SLE =  M_{\tau_J}^\SLE \, d\Prob. 
 \]
 and let $Q_1^\SLE,Q_2^\SLE$ denote the induced marginal
 measure on $\check \gamma(t), 0 \leq t \leq t_J$ and 
 $\check \eta(t), 0 \leq t \leq T_J$, respectively.
 By construction $Q_1^\SLE$ is the 
 distribution of radial SLE$_2$, parametrized
 naturally,
 stopped at half-plane capacity $\tau_J \approx Jh$.

If $K=K_{r}$ is chosen sufficiently large, except on an event of $Q_1^\SLE$-probability $O(r)$,
 \[    \sigma_{r} \leq t_{J} \leq \sigma_{r/2}, \]
 \[       \min_{0 \leq t\leq t_J}	
  S_t  \geq \delta  .\]
  (This uses Lemma~\ref{lemma1.nov9} and Lemma~\ref{lemma2.nov9}.) Call $E^{\SLE}$ this event that the last two estimates hold. Note that on $E^{\SLE}$, which is $\G_{J}$ measurable, $M^{\SLE}_{\tau_{J}}$ is bounded by a constant depending only on $r,\delta$, see Lemma~\ref{lemma2.nov9}.
 We similarly define $\Q^\LERW $ on $\G_{J}$ by the relation
 \[   d\Q^\LERW  =  M_{m_J}^\LERW \, d\Prob.\]
 The paths $\check \eta$ under the marginal
 $ Q^\LERW _2$ have the distribution
 of scaled radial LERW parametrized naturally, stopped at $T_J$. Let $E^{\LERW}$ be the event that 
 \[
 \min_{k \le m_{J}} S^{\LERW}_{k} \ge 2\delta, \quad \sigma^{\LERW}_{r} \le m_{J} \le \sigma^{\LERW}_{r/2}.
 \]
 Then using Lemma~\ref{aug7.lemma3} we see that \begin{equation}\label{LERW-good}\Q^{\LERW}(E^{\LERW}) \ge 1-O(r).\end{equation} 
The $\Q^{\SLE}$-marginal on $\check \eta$ is not the LERW distribtion, but almost. We will make this precise by estimating the total variation distance between $\Q^\SLE$ and $\Q^\LERW$. 
 \begin{lemma}
 There are constants $c_{r,\delta}, c < \infty$ such that for $N$ sufficiently large,
 \[
\|\Q^\LERW - \Q^\SLE \| \le c_{r, \delta }\ee_N + c \, r.
\]
 \end{lemma}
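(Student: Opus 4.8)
The plan is to compare the two re-weighted measures on the common sigma-algebra $\G_J$ by comparing the Radon--Nikodym derivatives $M^{\SLE}_{\tau_J}$ and $M^{\LERW}_{m_J}$ that define them, using the estimates assembled in Section~\ref{sect:radialvschordal}. Since $d\Q^\SLE = M^{\SLE}_{\tau_J}\,d\Prob$ and $d\Q^{\LERW} = M^{\LERW}_{m_J}\,d\Prob$, with $\Prob$ the law of the chordal coupling, one has the elementary bound
\[
\|\Q^\LERW - \Q^\SLE\| \le \E_\Prob\!\left[\,\bigl|M^{\LERW}_{m_J} - M^{\SLE}_{\tau_J}\bigr|\,\right].
\]
So everything reduces to showing this $L^1(\Prob)$ difference is at most $c_{r,\delta}\ee_N + c\,r$. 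The strategy is to split the expectation according to whether the ``good'' events hold: on the good event $E^{\SLE}\cap E^{\LERW}$ (intersected with the coupling event $V$) the two derivatives are both bounded by a constant $c_{r,\delta}$ and are termwise close, while on the complementary bad event we simply use that each derivative integrates to a mass close to the corresponding Poisson-kernel normalization, so the contribution is $O(r)$.

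First I would make the termwise comparison on the good event. Recall $M^{\SLE}_{\tau_J}$ is, by \eqref{nov8.3}, a ratio of squared sine processes times $\Im F(0)/\Im F_{\tau_J}(0)$, while $M^{\LERW}_{m_J}$, by \eqref{nov8.4}, is the same expression built from the discrete sines $S_{A_\eta,a',b}$ and $H_A(0,b)/H_{A_\eta}(0,b)$, up to a multiplicative $[1+O_{r,\delta}(N^{-u})]$ factor that requires the sines to stay above $\delta$ --- which is exactly what $E^{\SLE}$, $E^{\LERW}$ guarantee (with the factor-of-two slack in the definition of $E^{\LERW}$ absorbing discrete-vs-continuous discrepancies). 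The point $0$ is at distance $\ge r/2$ from the stopped paths on these events, so Lemma~\ref{lerwrnlemma} applies and identifies $H_A(0,b)/H_{A_\eta}(0,b)$ with $\Im F(0)/\Im F_\eta(0)$ up to $1+O_{r,\delta}(N^{-1/20})$. The remaining input is that on the coupling event $V$ the curves $\check\eta(T_k)$ and $\check\gamma(t_k)$ are within $\ee_N$ of each other in the relevant capacity range, hence the conformal maps $F_\eta$ and $F_{\tau_J}$, and therefore the sine processes and the imaginary-part ratios, agree up to an error that is $o(1)$ as $N\to\infty$ for fixed $r,\delta$; combined with boundedness of both derivatives by $c_{r,\delta}$ this yields $|M^{\LERW}_{m_J}-M^{\SLE}_{\tau_J}|\le c_{r,\delta}\ee_N$ on $V\cap E^{\SLE}\cap E^{\LERW}$.

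Next I would handle the bad event $B = \Omega\setminus(V\cap E^{\SLE}\cap E^{\LERW})$. Here I cannot bound the derivatives pointwise, but I can control their integrals: $\E_\Prob[M^{\SLE}_{\tau_J}\mathbf{1}_B]$ is the $\Q^\SLE$-mass of $B$, and since $\Q^\SLE$ has total mass comparable to $1$ (it is the radial SLE law pushed through the weighting) and $\Q^\SLE(E^{\SLE})\ge 1-O(r)$ by the choice of $K=K_r$ and Lemmas~\ref{lemma1.nov9}, \ref{lemma2.nov9}, while $\Prob(V)\ge 1-\ee_N$ and $\Q^\SLE$ is absolutely continuous with respect to $\Prob$ with a derivative that is not too large off a small set, we get $\E_\Prob[M^{\SLE}_{\tau_J}\mathbf 1_B] \le O(r) + c_{r,\delta}\ee_N$. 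The analogous bound for $\E_\Prob[M^{\LERW}_{m_J}\mathbf 1_B]$ uses \eqref{LERW-good}, i.e.\ $\Q^\LERW(E^\LERW)\ge 1-O(r)$, together with the fact that $\Q^\LERW$ also has mass comparable to $1$. Adding the three contributions gives the claimed bound.

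The main obstacle, and the step deserving the most care, is the comparison of the conformal/harmonic quantities on the good event: one must pass from the ``close in the $\rho$-metric at mesoscopic capacity times'' information supplied by the chordal coupling of \cite{LV_LERW_natural, LV_lerw_chordal_note} to closeness of $S_{A_\eta,a',b}$ versus $S_{\tau_J}$ and of $H_A(0,b)/H_{A_\eta}(0,b)$ versus $\Im F(0)/\Im F_{\tau_J}(0)$, uniformly while the sines stay above $\delta$ and $0$ stays at distance $\ge r/2$. This is where Lemma~\ref{lerwrnlemma} and \eqref{nov8.2}, \eqref{nov8.4} do the work, but one has to check that the error terms there, which are $O_{r,\delta}(N^{-u})$, and the coupling error $\ee_N$, combine into a single $c_{r,\delta}\ee_N$ (enlarging $\ee_N$ if necessary so that it dominates $N^{-u}$), and that the boundedness of $M^\SLE_{\tau_J}$ on $E^\SLE$ --- guaranteed by part~(i) of Lemma~\ref{lemma2.nov9} via the lower bound $h_{\check D\setminus\check\gamma_{\xi'}}(0,\check b)\ge c'\,h_{\check D}(0,\check b)$ --- transfers to a uniform bound on $M^\LERW_{m_J}$ on $E^\LERW$.
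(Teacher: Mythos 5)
Your proof follows essentially the same approach as the paper: both decompose according to a ``good event'' (on which the sine processes and the distance to $0$ are controlled, and the coupling event $V$ holds), use Lemma~\ref{lerwrnlemma} together with the closeness of the paths on $V$ to show the two Radon--Nikodym derivatives differ by at most $c_{r,\delta}\ee_N$ there, and on the bad event bound the contribution by the small $\Q$-masses of $(E^\SLE)^c$, $(E^\LERW)^c$ and $V^c$. Your $L^1(\Prob)$ formulation $\|\Q^\LERW-\Q^\SLE\|\le\E_\Prob[|M^\LERW_{m_J}-M^\SLE_{\tau_J}|]$ is just a repackaging of the paper's direct bound on $|\Q^\LERW(U)-\Q^\SLE(U)|$ over events $U\in\G_J$.

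One small gap worth noting: you take the good event to be $V\cap E^\SLE\cap E^\LERW$, which makes the bad event $B$ contain $(E^\LERW)^c$, so $\Q^\SLE(B)$ picks up the term $\Q^\SLE(V\cap E^\SLE\cap(E^\LERW)^c)$. This is not controlled by \eqref{LERW-good}, which is a $\Q^\LERW$-statement, nor is $\Prob(V\cap(E^\LERW)^c)$ directly small. The paper avoids this by conditioning on $V\cap E^\SLE$ only (it is here, not on $E^\LERW$, that the main termwise comparison is carried out), and then handles the remaining $\Q^\LERW(V\cap(E^\SLE)^c)$ via the inclusion $E^\LERW\cap V\subset E^\SLE\cap V$ to reduce it to $\Q^\LERW((E^\LERW)^c)\le cr$. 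With your decomposition the fix is not hard --- on $V\cap E^\SLE$ the two derivatives are within $c_{r,\delta}\ee_N$ of each other, so $\Q^\SLE(V\cap E^\SLE\cap(E^\LERW)^c)\le\Q^\LERW((E^\LERW)^c)+c_{r,\delta}\ee_N\le cr+c_{r,\delta}\ee_N$ --- but as written the step is missing.
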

 \begin{proof}
 Let $U \in \G_J$ be an arbitrary event. Then
\begin{align*}
|{\bf Q}^\LERW(U) - {\bf Q}^\SLE(U)|  \le & \, |{\bf Q}^\LERW(U\cap V \cap E^\SLE) - {\bf Q}^\SLE(U \cap V \cap E^\SLE)| \\
&  + {\bf Q}^\SLE(V^c \cap E^\SLE) +  {\bf Q}^\SLE((E^\SLE)^c) \\
& + {\bf Q}^\LERW(V^c \cap E^\LERW) + {\bf Q}^\LERW( (E^\LERW)^c)  
 \\ & + {\bf Q}^\LERW(V \cap (E^\SLE)^c)
\end{align*}
We estimate the terms on the right-hand side in order. First, on the event $V \cap E^\SLE$, the paths and Loewner chains are close and if $N$ is sufficuently large (depending on $r,\delta$) we can use Lemma~\ref{lerwrnlemma} to see that on this event,
\[
|M^\LERW_{n_J} - M^\SLE_{\tau_J}| \le c_{r,\delta} \ee_N, 
\]
so 
\[
|{\bf Q}^\LERW(U\cap V \cap E^\SLE) - {\bf Q}^\SLE(U \cap V \cap E^\SLE)| \le c_{r,\delta} \ee_N.
\]
Next, on the event $E^\SLE$ we have that $M^\SLE_{\tau_J}$ is bounded by a constant depending only on $r,\delta$. So using the fact that $\Prob(V^c) \le \ee_N$, we get
\[
{\bf Q}^\SLE(V^c \cap E^\SLE) \le c_{r,\delta} \ee_N
\]
We know that 
\[
{\bf Q}^\SLE((E^\SLE)^c) \le  c r.
\]
Similarly, using that $M^\LERW_{n_J}$ is bounded by $c_{r,\delta}$ on $E^\LERW$,
\begin{align*}
 {\bf Q}^\LERW(V^c \cap (E^\LERW)) & \le c_{r,\delta} \ee_N.
\end{align*}
Using  \eqref{LERW-good} we have
\[
\Q^\LERW((E^\LERW)^c) \le c r.
\]
And finally,  $E^\LERW \cap V \subset E^\SLE \cap V$, if $N$ is large enough so 
\[
{\bf Q}^\LERW(V \cap (E^\SLE)^c ) \le \Q^\LERW((E^\LERW)^c) \le  cr.
\]
We conclude that
\[
\|\Q^{\operatorname{LERW}} - \Q^\SLE \| \le c_{r, \delta }\ee_N + c r.
\]
\end{proof}
Hence if we take $N$ sufficiently large we see that the variation distance is at most $c r$, as claimed. It follows that for such $N$, the marginal of $\check \eta$ under $\Q^\SLE$ is within variation distance $cr$ of $Q^\LERW_2$. We take $\tau = m_{J}$ and $E$ to be the event $E^{\SLE}$. This completes the proof of Proposition~\ref{prop:main.intro}.

\section{Proof of Lemma~\ref{lerwrnlemma}}\label{sect.lemma1proof}
We recall the statement.
 \begin{lemma*}Let $r, \delta > 0$ be given. For all $A, \eta$ with $\eta \in I_{s}^{*}$ such that
   \begin{equation}\label{condition1111}
s \ge r N, \quad  \min_{j \leq n}
   S_{A_{j},a_{j},b} \ge \delta, \quad A_{j}=A \setminus \eta[0, \ldots \eta_{j}],
\end{equation}
we have
\[\frac{H_{A}(0,b)}{H_{{A_{\eta}}}(0,b)} = \frac{\Im F(0)}{\Im F_{\eta}(0)}\left(1+O_{r, \delta}(N^{-1/20}) \right).\]
Here  $F_{\eta}: D_{A_{\eta}} \to \mathbb{H}$ is the conformal map with $F_{\eta}(a_{\eta}) = 0, F_{\eta}(b) = \infty$ and such that $F_{\eta} \circ F^{-1}(z) = z + o(1)$  as $z \to \infty$. The error estimate is uniform over $A,\eta$ satisfying \eqref{condition1111}.
\end{lemma*}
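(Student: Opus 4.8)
The plan is to rewrite both sides as the probability that a \emph{conditioned} path hits the slit $\eta$, and then to compare these two harmonic-measure quantities.

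\emph{Left side.} Let $b^*$ be the interior vertex of the edge $b$. Since $\eta$ (hence the slit $A\setminus A_\eta$) is at macroscopic interior distance from $b$ by the sine hypothesis, $b^*$ is an interior vertex of $A_\eta$ as well, and a last-exit decomposition gives the exact identities $H_A(0,b)=\tfrac14 G_A(0,b^*)$ and $H_{A_\eta}(0,b)=\tfrac14 G_{A_\eta}(0,b^*)$. The Green's-function restriction identity $G_A(0,b^*)=G_{A_\eta}(0,b^*)+\E_0[G_A(X_{\tau_{A_\eta}},b^*);\,X_{\tau_{A_\eta}}\in\eta]$ (with $\tau_{A_\eta}$ the first exit time of $A_\eta$, and $G_A\equiv 0$ off $A$) then yields
\[
\frac{H_A(0,b)}{H_{A_\eta}(0,b)}=\frac{1}{1-p_A},\qquad p_A:=\Prob_0^{(b)}\{\eta\text{ is hit before }A\text{ is exited}\},
\]
where $\Prob_0^{(b)}$ is simple random walk from $0$ Doob-transformed by $H_A(\cdot,b)$, i.e.\ conditioned to leave $A$ through the edge $b$.

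\emph{Right side.} In $\Half$, with the normalization for which the Poisson kernel at $\infty$ is $\Im(\cdot)$, and for a compact hull $K$ attached to $\R$ with hydrodynamically normalized uniformizing map $\phi_K$, writing $\Im w-\Im\phi_K(w)$ as the bounded harmonic function on $\Half\setminus K$ with boundary data $\Im(\cdot)$ on $\partial K$ and $0$ on $\R$ gives $\Im w/\Im\phi_K(w)=(1-\Prob_w^{\infty}\{K\text{ hit}\})^{-1}$, where $\Prob_w^{\infty}$ is the $\Im$-Doob transform of planar Brownian motion (Brownian motion conditioned to escape to $\infty$). Taking $K=F(A\setminus A_\eta)$, $w=F(0)$, and transporting by $F$ (under which $F_\eta=\phi_K\circ F$, the $\Im$-transform toward $\infty$ in $\Half$ becomes the $\Im F(\cdot)$-transform toward $b$ in $D_A$, and hitting $K$ becomes hitting $\eta$), this reads
\[
\frac{\Im F(0)}{\Im F_\eta(0)}=\frac{1}{1-p^c},\qquad p^c:=\mathbf P_0^{(b)}\{\eta\text{ hit}\},
\]
with $\mathbf P_0^{(b)}$ the corresponding conditioned Brownian motion in $D_A$. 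Finally, the hypotheses $s\ge rN$ and $\min_j S_{A_j,a_j,b}\ge\delta$ force $\Im F_\eta(0)\ge c_{r,\delta}\,\Im F(0)$ --- this is Lemma~\ref{lemma2.nov9}(i) applied along $\eta$ --- so that $p^c\le 1-c_{r,\delta}$, and the analogous discrete estimate gives $p_A\le 1-c_{r,\delta}$ for $N$ large.

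\emph{Conclusion, and the main obstacle.} Since $1-p_A$ and $1-p^c$ are bounded below by $c_{r,\delta}$, the lemma reduces to the harmonic-measure comparison $|p_A-p^c|=O_{r,\delta}(N^{-1/20})$, uniform over \eqref{condition1111}, and this is where the work lies: one must compare the discrete and continuum harmonic measures of the \emph{lattice-rough} curve $\eta$ for processes conditioned toward the \emph{analytic, macroscopically distant} boundary edge $b$. I would establish it by (i) approximating the conditioning factor, $H_A(z,b)/H_A(0,b)=(\Im F(z)/\Im F(0))(1+O(N^{-u}))$ with polynomial rate for $z$ at macroscopic distance from $b$ and from $\eta$, which only needs analyticity of $\partial D$ near $b$ and is available from the estimates of \cite{KL,LL,LV_LERW_natural}; (ii) strongly coupling the conditioned walk with the conditioned Brownian motion down to a mesoscopic neighbourhood of $\eta\cup\partial A$, using Beurling estimates for the part of $\partial A$ away from $b$ and the comparison of (i) near $b$; and (iii) an overshoot estimate showing that, once within a mesoscopic tube of $\eta$, walk and Brownian motion hit $\eta$ with comparable probability. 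The hypothesis $s\ge rN$ guarantees the separating mesoscopic scales can be taken to be fixed powers of $N$; the exponent $1/20$ is whatever survives this chain of estimates and is far from optimal.
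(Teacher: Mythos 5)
Your reformulation of both sides as conditioned hitting probabilities is correct and, I think, illuminating: with $p_A$ the probability that the $H_A(\cdot,b)$-Doob-transformed walk started at $0$ hits $\eta$ before leaving $A$, and $p^c$ the probability that the $\Im F$-transformed Brownian motion in $D_A$ hits $\eta$, one does have $H_A(0,b)/H_{A_\eta}(0,b) = (1-p_A)^{-1}$ and $\Im F(0)/\Im F_\eta(0) = (1-p^c)^{-1}$, and the hypotheses $s \ge rN$, $\min_j S_{A_j,a_j,b} \ge \delta$ keep both $1-p_A$ and $1-p^c$ bounded below by a constant depending on $r,\delta$. So the lemma becomes the estimate $|p_A - p^c| = O_{r,\delta}(N^{-1/20})$.

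This is a genuinely different route from the paper's, and the difference is not cosmetic. The paper's proof is designed to avoid comparing discrete and continuous harmonic measure of the lattice-rough curve $\eta$ at all: after the last-exit identity $4H_*(0,b) = G_*(0,b^*)$ for $* \in \{A,A_\eta\}$, it makes a strong Markov decomposition at the exit time $\sigma$ of the \emph{same} small neighbourhood $Q$ of $b$ in both domains, so that the discrete harmonic measure $\Prob^{b^*}\{S_\sigma = z\}$ on $Q_{\text{top}}$ appears identically in numerator and denominator and cancels exactly in the ratio $G_A/G_{A_\eta}$. The only discrete-to-continuous input is then the interior Green's function asymptotics of \cite{KL} (Theorem 1.2), applied on the mesoscopic set $A^*$ where $g_\eta(0,\cdot) \ge N^{-1/16}$, together with the side-vs-top estimate from \cite{KL} (Lemma 3.11) near $b$ --- both available off the shelf and requiring nothing about $\eta$ beyond the sine hypothesis. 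Your reduction, by contrast, still requires $|p_A - p^c| \lesssim N^{-1/20}$, i.e.\ a comparison of discrete and continuous hitting distributions of a rough lattice slit for $h$-processes conditioned toward $b$. That is a genuinely harder estimate than what \cite{KL} supplies: \cite{KL} compares Green's functions and exit distributions of unconditioned walks and Brownian motion, not hitting probabilities of lattice paths for conditioned processes. The chain you sketch in (ii)--(iii) --- a strong coupling of the $h$-processes to a mesoscopic neighbourhood of $\eta$, Beurling separation, and an overshoot estimate inside a tube around $\eta$ --- is plausible in outline but not in the literature in the conditioned form you need, and it would take real work to verify that the polynomial rate survives it. So while your reduction is correct and the approach is conceptually natural, as written it trades the lemma for a different and arguably harder open estimate, which is precisely the comparison that the paper's cancellation trick at $Q_{\text{top}}$ is built to sidestep.
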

\begin{proof}
We start by considering the smaller domain $A_{\eta} = A \setminus \eta$. By a last exit decomposition we have $4 H_{A_{\eta}}(0,b) = G_{A_{\eta}}(0,b^{*})$. We will write $G_{\eta} = G_{A_{\eta}}$. Let $\psi: D_{A} \to \mathbb{D}, \, \psi(0) =0, \, \psi'(0)>0$ and $\psi_{\eta} : D_{A_{\eta}} \to \mathbb{D}$ with $\psi_{\eta}(0) = 0, \psi_{\eta}'(0)>0$. Write $\zeta = \psi(b)$. It is not hard to show using, e.g., the Loewner equation, that the condition on the sine in \eqref{condition1111} implies that there is a constant $c_{\delta} > 0$ such that uniformizing ``Loewner map'' satisfies $|(\psi_{\eta} \circ \psi^{-1})'(\zeta)| \ge c_{\delta}$.
Define
\[
A^{*} = \{z \in A: g_{\eta}(0,z)  \ge N^{-1/16} \}, 
\]
where $g_\eta(z,w)$ is the Green's function for Brownian motion in the simply connected domain $D_{A_{\eta}}$. We let \[Q=Q(b) = \{z \in A_{\eta} : |\psi_\eta(z) - \psi_\eta(b)| \le c_0N^{-1/16}\log N \} \cap (A_{\eta} \setminus A^*),\]
where we will specify $c_0 > 0$ in a moment. 
Consider simple random walk $S$ and its exit time of $Q$:
\[
\sigma = \min\{ j \ge 0: S_j \in Q^c \}.
\]
We write $Q_{\text{top}} = \partial Q \cap A^*$, $Q_{\text{sides}} = \partial Q \cap A_{\eta}$. By Lemma~3.11 of \cite{KL} it is possible to choose $c_0 < \infty$ depending only on $r, \delta$ so that
\begin{equation}\label{side-top}
\frac{H_Q(b^*,Q_\text{sides})}{H_Q(b^*,Q_\text{top})} \le N^{-2}
\end{equation} for $N$ sufficiently large and we assume $c_0$ is chosen in this way and that $N$ is large enough that \eqref{side-top} holds.

By the strong Markov property,
\begin{align*}
G_{\eta}(0,b^{*}) &= \sum_{z \in Q_{\text{top}}} G_{\eta}(z,0) \Prob^{b^{*}}\{S_{\sigma} = z\}  +  \sum_{z \in Q_{\text{sides}}} G_{\eta}(z,0) \Prob^{b^{*}}\{S_{\sigma} = z\}. 
\end{align*}
By a rough bound on the Green's function, there is a constant $c$ such that
\[
\sum_{z \in Q_{\text{sides}}} G_{\eta}(z,0) \Prob^{b^{*}}\{S_{\sigma} = z\} \le  c H_Q(b^*,Q_\text{sides}).
\]
When $z \in A^*$  we can apply Theorem~1.2 of \cite{KL} to see that 
\begin{equation}\label{KLgreen}
G_{\eta}(0,z) = (2/\pi)g_{\eta}(0,z)[1+O_{r, \delta}(N^{-1/4}) ].
\end{equation}
Hence, $\sum_{Q_{\text{top}}} G_{k}(0,z) \Prob^{b^*}\{S_\sigma =z\} \ge c_{r, \delta} N^{-1/16}H_Q(b^*, Q_{\text{top}})$.  Consequently, using \eqref{side-top} 
\[G_{\eta}(0,b^{*})  =  \sum_{z \in Q_{\text{top}}} G_{\eta}(z,0) \Prob^{b^{*}}\{S_{\sigma} = z\} \left[ 1 + O_{r,\delta}(N^{-1})\right].\]
Let $\hat{g} = F_{\eta} \circ F^{-1}$ and $w = F(0)$. Using conformal invariance we have the formula
\[
g_{\eta}(0,z) = \log \left| \frac{\hat g \circ F(z)-\hat g(w)}{\hat{g} \circ F(z)-\overline{\hat g(w)}} \right| = 2\frac{\Im F_{\eta}(0)}{|F(z)|^{2}/\Im F(z)}[1+O((\Im F(z))^{-1})],
\]
where the asymptotic expansion holds for $\Im F(z)$ large and we used the normalization of $\hat{g}$ at $\infty$. For $z \in \partial Q$ we have $\Im F(z) \ge C_r N^{1/16}/(\log N)^{2}$ and so by \eqref{KLgreen}
\begin{align*}
G_{\eta}(0,b^{*}) & =  \sum_{z \in Q_{\text{top}}} G_{\eta}(z,0) \Prob^{b^{*}}\{S_{\sigma} = z\} \left[ 1 + O_{r, \delta}(N^{-{1/4}})\right] \\
& =(4/\pi)\sum_{z \in Q_{\text{top}}}  \frac{\Im F_{\eta}(0)}{|F(z)|^{2}/\Im F(z)}  \Prob^{b^{*}}\{S_{\sigma} = z\}  [1+O_{r, \delta}(N^{-1/20}) ].
\end{align*}
We now consider the larger domain $A \supset A_{\eta}$. We keep the sets $Q, Q_{\text{top}}, Q_{\text{sides}}$  which are all contained in $A$ or in $\partial A$ for $N$ sufficiently large. We carry out the same argument as for $A_{\eta}$. The estimates \eqref{KLgreen} and \eqref{side-top} hold with $A_{\eta}$ replaced by $A$; the former using monotonicity of the Green's function and the latter holds by the assumption \eqref{condition1111}. The conclusion is that
\begin{align*}
G_{A}(0,b^{*}) & =  \sum_{z \in Q_{\text{top}}} G_{A}(z,0) \Prob^{b^{*}}\{S_{\sigma} = z\} \left[ 1 + O_{r, \delta}(N^{-{1/4}})\right] \\
& =(4/\pi)\sum_{z \in Q_{\text{top}}}  \frac{\Im F(0)}{|F(z)|^{2}/\Im F(z)}  \Prob^{b^{*}}\{S_{\sigma} = z\}  [1+O_{r, \delta}(N^{-1/20}) ].
\end{align*}
We divide this expression with the one for $G_{\eta}(0,b^{*})$ to see that
\[
\frac{G_{A}(0,b^{*})}{G_{\eta}(0,b^{*})}=\frac{\Im F(0)}{\Im F^{\eta}(0)}[1+O_{r, \delta}(N^{-1/20})],
\]
which, using the last-exit decomposition, is what we wanted to prove.
\end{proof}

\end{document}